\theoremstyle{definition}
\newtheorem{defi}{Definition}[section]
\newtheorem{remark}{Remark}
\theoremstyle{plane}
\newtheorem{theo}{Theorem} 
\newtheorem{lem}{Lemma}[section]
{}
\newtheorem{proposition}{Proposition}[section]
\newtheorem{cor}{Corollary}[section]
\newcommand{\R}{{\mathbb R}}
\newcommand{\N}{{\mathbb N}}
\newcommand{\ha}{{\rm Harm}}
\newcommand{\dis}{\displaystyle}
\newcommand{\la}{\langle}
\newcommand{\ra}{\rangle}
\title{Spherical designs of harmonic index $t$}
\author{Eiichi Bannai}
\address{Department of Mathematics, Shanghai Jiao Tong University, 800 Dongchuan RD. Minhang District, Shanghai, China}
\email{bannai@sjtu.edu.cn, bannai@math.kyushu-u.ac.jp}
\author{Takayuki Okuda}
\address{Graduate School of Information Sciences, Tohoku University, Aramaki aza Aoba 6-3-09, Aoba-ku Sendai-city Miyagi-pref. 
980-8579, Japan}
\email{okuda@ims.is.tohoku.ac.jp}
\author{Makoto Tagami}
\address{Graduate School of Computer Science and Systems Engineering, Kyushu Institute of Technology,
	680-4 Kawazu, Iizuka-shi, Fukuoka, 820-8502, Japan}
\email{tagami@ces.kyutech.ac.jp}
\begin{document}
\maketitle

\begin{abstract}
Spherical $t$-design is a finite subset on sphere such that, for any polynomial of degree at most $t$, the average value of the integral on sphere can be replaced by the average value at the finite subset. It is well-known that an equivalent condition of spherical design is given in terms of harmonic polynomials. In this paper, we define a spherical design of harmonic index $t$ from the viewpoint of this equivalent condition, and we give its construction and a Fisher type lower bound on the cardinality. Also we investigate whether there is a spherical design of harmonic index attaining the bound.    
\end{abstract}
\section{Introduction}
Let $t$ be a natural number, $S^{n-1}$ the $(n-1)$-dimensional unit sphere centered at the origin. A finite nonempty subset $X$ on $S^{n-1}$ is called a \textit{spherical $t$-design} if, for any polynomial $f(x)=f(x_1,\ldots,x_n)$ of degree at most $t$, the following equality holds:
\[\frac{1}{|S^{n-1}|}\int_{S^{n-1}}f(x)d\sigma(x)=\frac{1}{|X|} \sum_{x\in X} f(x),\]
where $\sigma$ is an $O(\R^n)$-invariant measure on $S^{n-1}$ and $|S^{n-1}|$ denotes the surface volume of the sphere $S^{n-1}$.
The concept of spherical design was defined by Delsarte-Goethals-Seidel (refer to \cite{DGS,D,BB99,BB09}). A spherical $t$-design means to be a good configuration of points on sphere so that the average value of the integral of any polynomial of degree at most $t$ on sphere can be replaced by the average value at the finite set on sphere.

Let $\triangle=\frac{\partial^2}{\partial x_1^2}+\cdots+\frac{\partial^2}{\partial x_n^2}$ be the Laplacian.
A polynomial $f(x)$ is \textit{harmonic} if $\triangle f(x)=0$. Then put
\[
\ha_t(\R^n)=\left\{f(x) \mid \mbox{$f(x)$ is a harmonic and homogeneous polynomial of degree $t$ on $\R^n$} \right\}. 
\]
The dimension of $\ha_t(\R^n)$ is ${n+t-1 \choose t}-{n+t-3 \choose t-2}$ (refer to \cite[page 478 ]{AAR}). 
Some equivalent conditions of spherical design are known. In particular, the following condition is quite often used(\cite{D,BB99,BB09}):  
\begin{center}
For any $f(x) \in \ha_j(\R^d)$ and $1\leq j \leq t$, $\sum_{x\in X} f(x)=0$.
\end{center}
From this condition, we introduce the following notion which is the main subject in this paper:
\begin{defi}[Spherical design of harmonic index $t$]\label{index}
A finite nonempty subset $X$ on $S^{n-1}$ is called a \textit{spherical design of harmonic index $t$} (or simply, harmonic index $t$-design) if, for any $f(x)\in \ha_t(\R^n)$,  $\sum_{x\in X} f(x)=0 $.
\end{defi}
We are interested in what figure appears as spherical designs of harmonic index, and whether we can give a natural lower bound for harmonic index designs similar to the case of usual spherical design.

When $t$ is odd, any antipodal two points $X=\{x, -x\}$ forms a harmonic index $t$-design because $\sum_{x\in X} f(x)=f(x)+f(-x)=f(x)-f(x)=0$ for any $f(x)$ in $\ha_t(\R^n)$. So, from now on, we consider only the case when $t$ is even.

When $t$ is even, for any $f(x) \in \ha_t(\R^n)$, $f(-x)=f(x)$. So we remark that one can take harmonic index $t$-designs just on hemisphere. For any $n, t  \in \N$, from Seymour-Zaslavsky's theorem\cite{SZ}, if we make the number of vertices big enough, there always exists a harmonic index $t$-design on $S^{n-1}$. We denote the minimum cardinality of harmonic index $t$-design on $S^{n-1}$ by $A(n,t)$. From the above, we see that, when $t$ is odd, $A(n,t)=2$.

First we consider the case when $n=2$ and $t=2e$. Let $x,y$  be two unit vectors in $\R^2$ with angular $\theta=j\pi/2e$ for odd $j$. Then $X=\{x,y\}$ is a harmonic index $2e$-design on $S^1$. So we see that $A(2,t)=2$.

Next we consider the case when $t=2$ and $n \ge 2$. Let $X=\{e_1,\ldots,e_n\}$ be an orthonormal basis of $\R^n$ (that is, an antipodal half part of regular cross-polytope). Then it is easy to see that $X$ is a harmonic index $2$-design on $S^{n-1}$. Therefore $A(n,2)\le n$. In fact, we will show $A(n,2)=n$ later.
\footnote{We note that this is also proved as follows. If $X$ is a harmonic index $2$-design in $S^{n-1}$, then $X \cup (-X)$ is a spherical 3-design (possibly with multiplicities). The classification of tight spherical $3$-desigs(possibly with multiplicities) implies the claim.}

Let $C_t^\lambda(x)$ be the Gegenbauer poynomial of degree $t$, that is, when $\lambda\not=0$, 
\[(1-2xr+r^2)^{-\lambda}=\sum_{t=0}^\infty C_t^\lambda(x) r^t,\]
and when $\lambda=0$, it is  the Chebychev polynomial of first kind, that is, $C_t^0(x)=T_t(x)=\cos(t \arccos(x))$.
$\{C_t^\lambda(x)\}$ is a set of orthogonal polynomials on an interval $[-1,1]$ with a weight function $(1-x^2)^{\lambda-1/2}$ and so they have all roots on $[-1,1]$ (refer to \cite{AAR, E}). For simplicity, we normalize $C_t^\lambda(x)$ as $Q_{n,t}(x)=Q_t(x)=A_{n,t} C_t^{\frac{n-2}{2}}(x)$, where a normalization factor $A_{n,t}$ is given so as to be $Q_t(1)={n+t-1 \choose t}-{n+t-3 \choose t-2}$. For $x,y \in \R^n$, $\la x,y \ra$ denotes the standard inner product. Then (for covenience, using the same symbol $Q_{n,t}$), $Q_{n,t}(x,y)=Q_{n,t}(\la x,y \ra)$ becomes the reproducing kernel on the Hilbert space $\ha_t(S^{n-1})$ which consists of all functions of  $\ha_t(\R^n)$ restricted on $S^{n-1}$ (refer to \cite{HP}). Also for a finite subset $X$ on sphere, we set $I(X)=\{\la x,y\ra \mid x, \not= y  \in X\}$.

The following is the main theorem I in this paper, which gives a simple construction of harmonic index designs by using usual spherical designs in lower dimension by 1.  
\begin{theo}\label{const}
Let $X$ be a spherical $t$-design on $S^{n-2}$, $r$ a root of $Q_{n,t}(s)$. Set $X' \subset S^{n-1}$ to be :
\[X'=\left\{\left(r,\sqrt{1-r^2}x\right) \in S^{n-1}\mid x \in X\right\}.\]
Then $X'$ is a harmonic index $t$-design.
\end{theo}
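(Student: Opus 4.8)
The plan is to reduce Definition~\ref{index} to a single pointwise identity for the reproducing kernel, and then to expand that kernel by the Gegenbauer addition theorem so that the spherical-design property of $X$ does all the work. The starting observation is the equivalence: a finite set $X' \subset S^{n-1}$ is a harmonic index $t$-design if and only if
\[
\sum_{y \in X'} Q_{n,t}(\la y, \eta\ra) = 0 \qquad \text{for every } \eta \in S^{n-1}.
\]
The forward direction is immediate, since for fixed $\eta$ the function $Q_{n,t}(\la \cdot, \eta\ra)$ lies in $\ha_t(S^{n-1})$; the reverse direction uses the reproducing property $f(\eta) = \frac{1}{|S^{n-1}|}\int_{S^{n-1}} f(\zeta)\, Q_{n,t}(\la \eta,\zeta\ra)\,d\sigma(\zeta)$ and integrates the hypothesis against an arbitrary $f \in \ha_t(S^{n-1})$. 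So it suffices to verify the displayed identity for the particular $X'$ of the theorem.

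Next I would parametrize. Writing a design point as $y = (r,\sqrt{1-r^2}\,x)$ with $x \in X$, and an arbitrary $\eta = (s,\sqrt{1-s^2}\,v)$ with $v \in S^{n-2}$ and $s \in [-1,1]$, one has $\la y,\eta\ra = rs + \sqrt{(1-r^2)(1-s^2)}\,\la x,v\ra$. With $\lambda = (n-2)/2$, the Gegenbauer addition theorem expands this as
\[
C_t^{\lambda}\big(\la y,\eta\ra\big) = \sum_{k=0}^{t} a_{t,k}\,(1-r^2)^{k/2}(1-s^2)^{k/2}\, C_{t-k}^{\lambda+k}(r)\, C_{t-k}^{\lambda+k}(s)\, C_k^{\lambda-\frac{1}{2}}(\la x,v\ra),
\]
with positive constants $a_{t,k}$. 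The crucial matching is that $\lambda - \tfrac{1}{2} = (n-3)/2$ is exactly the Gegenbauer parameter attached to $S^{n-2}$, so each factor $C_k^{(n-3)/2}(\la x,v\ra)$ is (the restriction of) a zonal harmonic of degree $k$ on $S^{n-2}$.

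Summing over $x \in X$ and invoking that $X$ is a spherical $t$-design on $S^{n-2}$ — in the harmonic formulation recalled in the introduction, $\sum_{x\in X} C_k^{(n-3)/2}(\la x,v\ra) = 0$ for all $1 \le k \le t$ and all $v$ — kills every term with $k \ge 1$. Only the $k=0$ term survives, and since $C_0^{(n-3)/2} \equiv 1$ it contributes
\[
\sum_{x\in X} C_t^{\lambda}\big(\la y,\eta\ra\big) = a_{t,0}\,|X|\,C_t^{\lambda}(r)\,C_t^{\lambda}(s).
\]
Because $r$ is a root of $Q_{n,t}$, hence of $C_t^{\lambda}$, this vanishes identically in $\eta$; multiplying by the normalization $A_{n,t}$ gives the required kernel identity, and the equivalence of the first paragraph finishes the proof.

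I expect the main obstacle to be citing and aligning the addition theorem correctly: matching the parameter shift $\lambda \mapsto \lambda - \tfrac{1}{2}$ to the dimension drop $n \mapsto n-1$, and confirming that the surviving coefficient $a_{t,0}$ is genuinely the constant that reproduces $C_t^{\lambda}(r)\,C_t^{\lambda}(s)$, are the delicate points. Everything else — the reproducing-kernel reduction and the passage from the design property to the vanishing of the zonal sums — is essentially bookkeeping.
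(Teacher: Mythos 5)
Your proof is correct and is essentially the paper's own argument in kernel form: the paper verifies the vanishing of $\sum_{y\in X'}f(y)$ directly on the orthogonal basis $S_{j,l}(\xi_{n-1})(1-s^2)^{j/2}Q_{2j+n,t-j}(s)$ of $\ha_t(S^{n-1})$ taken from Andrews--Askey--Roy, and the Gegenbauer addition theorem you invoke is exactly the expansion of the zonal kernel $Q_{n,t}(\la y,\eta\ra)$ in that same basis (note that $C_{t-k}^{\lambda+k}$ is proportional to $Q_{2k+n,t-k}$), so in both arguments the design property of $X$ kills every term with $k\ge 1$ and the choice of $r$ as a root of $Q_{n,t}$ kills the $k=0$ term. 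Your one extra step --- reducing Definition~\ref{index} to the vanishing of the zonal sums $\sum_{y\in X'}Q_{n,t}(\la y,\eta\ra)$ for every pole $\eta$ via the reproducing property --- is sound and is just a pointwise variant of the paper's Lemma~\ref{equ}.
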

The proof will be given in Section \ref{proof}. 
By Bondarenko--Radchenko--Viazovska's result in \cite{BRV,BRV2}, 
for a fixed $n$, 
there exists a spherical $t$-design on $S^{n-1}$ of size $O(t^{n-1})$. 
Hence Theorem \ref{const} gives $A(n,t) \leq O(t^{n-2})$ as a function of $t$
(see Corollary \ref{cor:BRV} in Section \ref{proof} for more details). 

Since the regular $(2e+1)$-gon is a spherical $(2e)$-design on $S^1$, by Theorem \ref{const}, taking the radius suitably and putting it on $S^2$, we obtain a harmonic index $(2e)$-design on $S^{2}$. To construct a harmonic index $4$-design by Theorem \ref{const}, we must take a regular 5-gon. By computer calculation, we will show that there does not exist a harmonic index 4-design of 4 vertices on $S^2$ and so $A(3,4)=5$. Moreover we will show that the configuration of points giving $A(3,4)=5$ is essentially unique, that is, they are given by the configuration obtained from a regular 5-gon using  Theorem \ref{const} (for these proofs, refer to Section \ref{comp}). Similarly, from a regular 7-gon, we have $A(3,6) \le 7$. Also up to the present, we have not detected a harmonic index $6$-design with at most 6 vertices. From these facts, at first, we conjectured that a minimum size harmonic index $(2e+1)$-design on $S^2$ is always given by a regular $(2e+1)$-gon and it is unique. But in fact, we realized from the following examples that, even if $t$ is big, there exists a harmonic index $t$-design with rather small number of vertices. For examples, take 6 vertices of an antipodal half part from 12 vertices of a regular icosahedron, then they form a harmonic index 8-design and moreover it is also a harmonic index 14-design. This fact follows from the harmonic Molien series of the icosahedral group. This number of vertices, 6 is far smaller than the number of vertices for hamonic index 14-design, 15, which is obtained from the regular 15-gon on $S^1$ by Theorem \ref{const}. A similar phenomenon can be seen in the following example, too. For example, in the case when $n=8$, the root system of type $E_8$, 240 points (and its antipodal half part, 120 points) is a harmonic index 10-design. While, the Fisher type inequality for usual spherical 10-designs on $S^6 \subset \R^7$ gives a lower bound $ {11\choose 5}+{10\choose 4}=672$, and it is far bigger than 120. Also in the case when $n=4$, 120 points of the $600$-cell (and its antipodal half part, 60 pointsjis a harmonic index 58-design. While, the Fisher type inequality for usual spherical 58-designs on $S^2 \subset \R^3$ gives a lower bound $ {31 \choose 29}+{30\choose 28}=900$, and it is far bigger than 60.

For spherical designs, there is the lower bound which is called the Fisher type inequality (refer to \cite{DGS}). The following is the main theorem II in this paper, which gives a Fisher type inequality for harmonic index designs.
\begin{theo}\label{linear}
Let $X$ be a harmonic index $t$-design on $S^{n-1}$. Put
\[c_{n,t}=- \min_{-1\le x \le 1} Q_{n,t}(x).\]
Then the following inequality holds:
\begin{equation}\label{fis}
|X| \ge 1+\frac{1}{c_{n,t}}\left\{{n+t-1\choose t}-{n+t-3\choose t-2}\right\}.
\end{equation}
Moreover, equality holds in (\ref{fis}) if and only if for any $a\in I(X)$, $c_{n,t}=-Q_{n,t}(a)$.
\end{theo}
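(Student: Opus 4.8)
The plan is to mimic the classical Delsarte–Goethals–Seidel linear-programming (Fisher-type) argument, but using the single harmonic-degree kernel $Q_{n,t}$ in place of a nonnegative combination of several Gegenbauer polynomials. The key object will be the sum
\[
S=\sum_{x,y\in X} Q_{n,t}(\la x,y\ra),
\]
which I intend to estimate from two sides: a lower bound coming from the design condition together with positive-definiteness of the reproducing kernel, and an upper bound coming from the pointwise inequality $Q_{n,t}(a)\ge -c_{n,t}$ on $[-1,1]$.

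**First I would** establish the lower bound. Since $Q_{n,t}(\la x,y\ra)$ is the reproducing kernel of $\ha_t(S^{n-1})$, fixing an orthonormal basis $\{\varphi_1,\dots,\varphi_h\}$ of that space (with $h={n+t-1\choose t}-{n+t-3\choose t-2}=\dim\ha_t(\R^n)=Q_{n,t}(1)$) gives the addition-formula expansion $Q_{n,t}(\la x,y\ra)=\sum_{k=1}^{h}\varphi_k(x)\varphi_k(y)$. Hence
\[
S=\sum_{k=1}^{h}\Bigl(\sum_{x\in X}\varphi_k(x)\Bigr)^{2}\ge 0.
\]
But $X$ is a harmonic index $t$-design, so $\sum_{x\in X}\varphi_k(x)=0$ for every $k$ (each $\varphi_k\in\ha_t(\R^n)$ restricted to the sphere), which forces $S=0$. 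This is the crucial place where the design hypothesis enters, and it replaces the usual ``$S\ge 0$'' by the sharper ``$S=0$''.

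**Next I would** split off the diagonal terms and bound the off-diagonal terms. The diagonal contributes $|X|\cdot Q_{n,t}(1)=|X|\,h$, while each off-diagonal term satisfies $Q_{n,t}(\la x,y\ra)\ge \min_{-1\le s\le 1}Q_{n,t}(s)=-c_{n,t}$ by the definition of $c_{n,t}$. Therefore
\[
0=S=|X|\,h+\sum_{x\neq y}Q_{n,t}(\la x,y\ra)\ge |X|\,h-c_{n,t}\,|X|(|X|-1).
\]
Rearranging (and dividing by $|X|\,c_{n,t}>0$) yields $|X|-1\ge h/c_{n,t}$, which is exactly the claimed inequality (\ref{fis}). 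I would note here that $c_{n,t}>0$ because $Q_{n,t}$ is a nonconstant polynomial with $\int Q_{n,t}\,d\sigma=0$, so its minimum on $[-1,1]$ is strictly negative.

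**For the equality characterization** I would trace back where the single inequality was used: the only slack is in replacing each off-diagonal $Q_{n,t}(\la x,y\ra)$ by its lower bound $-c_{n,t}$. Since the sum of the (nonnegative) gaps $Q_{n,t}(\la x,y\ra)+c_{n,t}$ must vanish exactly when equality holds, equality in (\ref{fis}) is equivalent to $Q_{n,t}(\la x,y\ra)=-c_{n,t}$ for every pair $x\ne y$, i.e. $c_{n,t}=-Q_{n,t}(a)$ for all $a\in I(X)$. **The main obstacle** I anticipate is purely bookkeeping rather than conceptual: making sure the addition formula for $Q_{n,t}$ is invoked with the correct normalization so that the diagonal value is genuinely $Q_{n,t}(1)=h$ (this is exactly why the normalization $Q_t(1)=\dim\ha_t(\R^n)$ was imposed earlier), and confirming that $c_{n,t}>0$ so the division preserves the inequality direction. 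Both are routine given the setup, so the argument should go through cleanly.
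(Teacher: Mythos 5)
Your proposal is correct and follows essentially the same argument as the paper: the paper isolates the identity $\sum_{x,y\in X}Q_{n,t}(\la x,y\ra)=0$ as a separate lemma (via the same addition-formula computation you perform inline) and then packages your off-diagonal estimate as the nonnegativity of $F(s)=c_{n,t}+Q_{n,t}(s)$ summed over all pairs, which after rearrangement is exactly your inequality $0\ge |X|\,h-c_{n,t}\,|X|(|X|-1)$. The equality analysis is also identical, so no further comment is needed.
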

We note that, since $Q_{n,t}(x)$ is an orthogonal polynomial on $[-1,1]$, $c_{n,t}$ is always positive. The proof of Theorem \ref{linear} will be given in Section \ref{proof}.
Denote the lower bound of (\ref{fis}) by $b_{n,t}$F
\[b_{n,t}:=1+\frac{1}{c_{n,t}}\left\{{n+t-1\choose t}-{n+t-3\choose t-2}\right\}.\]
We are interested in the case when $b_{n,t}$ is an integer. In this case, if there exists a harmonic index $t$-design $X$ on $S^{n-1}$ whose cardinality is exaclty $b_{n,t}$, then we say that $X$ is a \textit{tight} harmonic index $t$-design. For example, in the case when $t=2$, $b_{n,2}=n$, and, as we stated above, an antipodal half part of a regular cross-polytope is a harmonic index $2$-design with the cardinality $|X|=n$, and therefore it is a tight harmonic index $2$-design. In particular, we have $A(n,2)=n$.

A tight harmonic index design forms equiangular lines (refer to Corollary \ref{2dis} and for equiangular lines, refer to Brouwer-Haemers\cite[page 161]{BH}). Also fix $n$ and make $t$ big limitlessly, then $b_{n,t}$ converges some value, and, in general, the convergence value becomes bigger than the absolute bound $n(n+1)/2$ on equiangular lines on $\R^n$. Therefore we see that there hardly exists a tight harmonic index design for general $n,t$. On non-existence of tight harmonic index design, refer to Section \ref{tight}.

The contents of this paper is as follows. In Section \ref{proof}, we will give the proof of the main theorems I and II. In Section \ref{comp}, we will state some calculation results on harmonic index designs, in particular, we will state on a unique configuration of points giving $A(3,4)=5$. In Section \ref{tight}, we will give a necessary condition for parameters such that a tight harmonic index design exists, and we will show that tight harmonic index designs hardly exist. Also in Appendix I, we give a table on $b_{n,t}$ of Theorem \ref{linear} for $3\le n \le 10$ and $4\le t \le 20$. In Appendix II, we will give a Groebner basis used to show the uniqueness of points configuration giving $A(3,4)=5$.
\section{Proof of Main Theorems I,II}\label{proof}
\textit{Proof of Theorem \ref{const}}
We use the construction of an orthogonal basis in $\ha_{t}(S^{n-1})$ which is given in Andrews-Askey-Roy \cite[page 461]{AAR}. But here, for convenience, we use $Q_{n,t}(x)$ instead of $C_t^\lambda(x)$ in \cite{AAR}. Let $d_{n,t}:=\dim \ha_t(S^{n-1})$, and for $0\le j\le t$, 
$\{S_{j,l}(\xi_{n-1}) :  1\le l \le d_{n-1,j}\}$ be an orthogonal basis of $\ha_j(S^{n-2})$ where $\xi_{n-1} \in S^{n-2}$. Then if we take a variable $\xi \in S^{n-1}$ as $\xi=x/|x|=\left(s,\sqrt{1-s^2}\xi_{n-1}\right)$
\[\left\{S_{j,l}(\xi_{n-1})(1-s^2)^\frac{j}{2}Q_{2j+n,t-j}(s) : 0\le j \le t, 1\le l \le d_{j,n-1}\right\}\]
becomes an orthogonal basis of $\ha_{t}(S^{n-1})$. It is clear that the sum over $X$ for functions in the above basis of $\ha_{t}(S^{n-1})$ except for $j=0$ is 0 since $X$ is a spherical $t$-design on $S^{n-2}$. Also in the case when $j=0$, the function of the above basis is $Q_{n,t}(s)$. Therefore, if we take a root of $Q_{n,t}(s)$ as r, then all functions of the above basis satisfies that the sum over $X'$ is 0, and the theorem follows.
\hfill{$\Box$}
\begin{lem}\label{equ}
$X \subset S^{n-1}$ is a harmonic index $t$-design if and only if
\[\sum_{x,y \in X} Q_{n,t}(\la x,y \ra) = 0.\]
\end{lem}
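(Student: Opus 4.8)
The plan is to exploit the reproducing-kernel property of $Q_{n,t}(\la x,y\ra)$ on the space $\ha_t(S^{n-1})$, which collapses the double sum into a sum of squares. First I would fix a real orthonormal basis $\{S_1,\dots,S_{d_{n,t}}\}$ of $\ha_t(S^{n-1})$ with respect to the normalized surface measure $\frac{1}{|S^{n-1}|}d\sigma$. The addition theorem for Gegenbauer polynomials, together with the normalization $Q_{n,t}(1)=d_{n,t}$ chosen in the paper, yields the expansion
\[
Q_{n,t}(\la x,y\ra)=\sum_{i=1}^{d_{n,t}} S_i(x)\,S_i(y),
\]
which is exactly the statement (already recorded in the excerpt) that $Q_{n,t}(x,y)$ is the reproducing kernel on $\ha_t(S^{n-1})$.

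Next I would substitute this expansion into the double sum and interchange the finite summations:
\[
\sum_{x,y \in X} Q_{n,t}(\la x,y\ra)
=\sum_{i=1}^{d_{n,t}}\Bigl(\sum_{x\in X}S_i(x)\Bigr)\Bigl(\sum_{y\in X}S_i(y)\Bigr)
=\sum_{i=1}^{d_{n,t}}\Bigl(\sum_{x\in X}S_i(x)\Bigr)^2 .
\]
This single identity delivers both implications. For the forward direction, if $X$ is a harmonic index $t$-design then $\sum_{x\in X}S_i(x)=0$ for every $i$, since each $S_i\in\ha_t(S^{n-1})$; hence the right-hand side, and therefore the double sum, vanishes. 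For the converse, the right-hand side is a sum of squares of real numbers, so its vanishing forces $\sum_{x\in X}S_i(x)=0$ for each $i$. Because $\{S_i\}$ spans $\ha_t(S^{n-1})$ and every $f\in\ha_t(\R^n)$ restricts to an element of $\ha_t(S^{n-1})$, linearity then gives $\sum_{x\in X}f(x)=0$ for all $f\in\ha_t(\R^n)$, so $X$ is a harmonic index $t$-design.

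The one point to handle carefully, and the main obstacle, is verifying the addition formula with the paper's precise normalization: one must confirm that $Q_{n,t}=A_{n,t}C_t^{(n-2)/2}$, normalized so that $Q_{n,t}(1)=d_{n,t}$, is genuinely the reproducing kernel, so that it decomposes as $\sum_i S_i(x)S_i(y)$ with \emph{real-valued} $S_i$. Realness of the basis is what guarantees that each term in the final expression is a nonnegative square, which is precisely what the converse direction needs. Since this reproducing-kernel identity is stated earlier in the excerpt (with reference \cite{HP}), I would invoke it directly rather than reprove it, and simply record the normalization check to make the sum-of-squares argument rigorous.
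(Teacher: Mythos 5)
Your proposal is correct and follows essentially the same route as the paper: both expand $Q_{n,t}(\la x,y\ra)$ via the reproducing-kernel (addition) formula into $\sum_i f_i(x)f_i(y)$ over an orthonormal basis of $\ha_t(S^{n-1})$ and reduce the double sum to the sum of squares $\sum_i\bigl(\sum_{x\in X}f_i(x)\bigr)^2$, from which both implications follow. Your write-up merely makes explicit two points the paper leaves implicit, namely the realness of the basis (needed so the squares are nonnegative) and the spanning argument for the converse.
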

\begin{proof}
Let $\left\{f_i(x)\mid 1\le i \le d_{n,t}\right\}$ is an orthonormal basis of a Hilbert space $\ha_t (S^{n-1})$. Then since $Q_{n,t}(x,y)$ is the reproducing kernel of $\ha_t (S^{n-1})$, we have the following addition formula:
\[Q_{n,t}(x,y)=Q_{n,t}(\la x,y \ra)=\sum_{i=1}^{d_{n,t}}f_i(x)f_i(y).\]
Hence,
\[\sum_{x,y \in X} Q_{n,t}(\la x,y \ra)=\sum_{x,y \in X}\sum_{i=1}^{d_{n,t}}f_i(x)f_i(y)=\sum_{i=1}^{d_{n,t}}\sum_{x,y \in X}f_i(x)f_i(y)=\sum_{i=1}^{d_{n,t}}\left(\sum_{x\in X} f_i(x)\right)^2.\]
From this equation, this lemma follows.
\end{proof}

By combining Theorem \ref{const} with results in \cite{BRV,BRV2} (see Remark \ref{rem:BRV} below for more details), 
we obtain the following corollary:

\begin{cor}\label{cor:BRV}
There exists a constant $C_{n-2}$ depending only on $n$ such that 
for each $N > C_{n-2} t^{n-2}$,
there exists a spherical design of harmonic index $t$ on $S^{n-1}$ of size $N$. 
\end{cor}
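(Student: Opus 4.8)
The plan is to combine the cardinality-preserving construction of Theorem \ref{const} with the asymptotically optimal existence result of Bondarenko--Radchenko--Viazovska, keeping careful track of the dimension shift between $S^{n-2}$ and $S^{n-1}$.

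First I would recall the precise form of the theorem from \cite{BRV}: for the $d$-dimensional sphere $S^d$ there is a constant $c_d$ such that for \emph{every} integer $N \ge c_d\, t^{d}$ there exists a spherical $t$-design on $S^d$ consisting of exactly $N$ points. The strength here is that one obtains designs of all sufficiently large sizes, not merely of one size, which is exactly what is needed to hit a prescribed cardinality $N$. Applying this statement with $d = n-2$ produces a constant $C_{n-2}$ (depending only on $n$) so that for each $N > C_{n-2}\, t^{n-2}$ there is a spherical $t$-design $X$ on $S^{n-2}$ with $|X| = N$.

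Second, I would verify that Theorem \ref{const} applies to such an $X$ and that it preserves cardinality. Since $Q_{n,t}$ is a nonzero scalar multiple of the Gegenbauer polynomial $C_t^{\frac{n-2}{2}}$, it has $t$ distinct real roots, all lying in the open interval $(-1,1)$; I choose any such root $r$, so that $\sqrt{1-r^2} > 0$. The assignment $x \mapsto \left(r, \sqrt{1-r^2}\, x\right)$ sends $S^{n-2}$ injectively into $S^{n-1}$: injectivity holds because $\sqrt{1-r^2} \neq 0$, and the image lies on $S^{n-1}$ because $r^2 + (1-r^2)|x|^2 = 1$ whenever $|x| = 1$. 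Consequently the set $X'$ furnished by Theorem \ref{const} is a harmonic index $t$-design on $S^{n-1}$ with $|X'| = |X| = N$.

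Chaining the two steps then yields, for each $N > C_{n-2}\, t^{n-2}$, a spherical design of harmonic index $t$ on $S^{n-1}$ of size exactly $N$, which is the assertion. I do not expect an essential obstacle; the points requiring care are the dimension bookkeeping (a design on $S^{n-2}$ lives in $\R^{n-1}$, so the correct BRV exponent is $n-2$, matching the claimed $t^{n-2}$) and the observation that the construction is injective, so the prescribed cardinality is attained exactly rather than merely bounded.
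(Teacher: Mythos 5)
Your proposal is correct and takes essentially the same approach as the paper: apply the Bondarenko--Radchenko--Viazovska existence theorem in dimension $n-2$ and lift the resulting design to $S^{n-1}$ via the injective map $x \mapsto \left(r,\sqrt{1-r^2}\,x\right)$ of Theorem \ref{const}, which preserves cardinality. The only caveat is bibliographic: the statement you attribute to \cite{BRV} (designs of every sufficiently large size consisting of \emph{distinct} points) really requires the well-separated refinement \cite{BRV2}, since the designs in \cite{BRV} are sequences that may contain repeated points---precisely the distinction the paper draws in Remark \ref{rem:BRV}.
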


\begin{remark}\label{rem:BRV}
Let us fix $n$ and $t$.
In \cite{BRV}, Bondarenko--Radchenko--Viazovska proved that 
for each $N \geq C_{n-1} t^{n-1}$, 
there exists a sequence $x_1,\dots,x_N \in S^{n-1}$ such that 
\begin{align}
\frac{1}{N}\sum_{i=1}^{N} f(x_i) = \frac{1}{|S^{n-1}|} \int_{S^{n-1}} f(x) d\sigma(x) \label{defn:BRVdesign}
\end{align}
for any polynomial $f(x)$ of degree at most $t$,
where $C_{n-1}$ is a constant depending only on $n$.
It should be remarked that 
a sequence $x_1,\dots,x_N \in S^{n-1}$ with the property \eqref{defn:BRVdesign} is called a ``spherical $t$-design'' in their paper.
However, in our definition of spherical designs, a sequence $x_1,\dots,x_N$ is required to be distinct each other.
In \cite{BRV2}, Bondarenko--Radchenko--Viazovska improved their result as follows:
there exist positive constants $C_{n-1}'$ and $\lambda_{n-1}$ depending only on $n$
such that for each $N > C_{n-1}' t^{n-1}$, 
we can find a ``spherical $t$-design'' $x_1,\dots,x_N \in S^{n-1}$ in their sense with 
\[
|x_i - x_j| > \lambda_{n-1} N^{-1/{(n-1)}} \quad \text{for any distinct } i,j.
\]
Especially, $x_1,\dots,x_N \in S^{n-1}$ gives a spherical $t$-design of size $N$ even in our sense.
\end{remark}

\textit{Proof of Theorem \ref{linear}}
Let $F(s) := c_{n,t} + Q_{n,t}(s)$. Then from the definition of $c_{n,t}$, $F(s)$ is a non-negative function on $[-1,1]$. We evaluate $\sum_{x,y \in X} F(\la x,y \ra)$ in two ways.

Since $X$ is a harmonic index $t$-design, by Lemma \ref{equ}, $\sum_{x,y \in X} Q_{n,t}(\la x,y \ra) = 0$.
Therefore,
\begin{equation}
\sum_{x,y \in X} F(\la x,y \ra) =\sum_{x,y \in X}\left( c_{n,t} + Q_{n,t}(\la x,y \ra)\right)=c_{n,t} |X|^2 \label{eq:1}.
\end{equation}
Next, since $F(s)$ is non-negative on $[-1,1]$ and $Q_{n,t}(1) =\dim \ha_t(\R^n)={n+t-1 \choose t}-{n+t-3 \choose t-2}$, 
\begin{equation}\label{eq:2}
\sum_{x,y \in X} F(\la x,y \ra) \geq \sum_{x \in X} F(\la x,x \ra)=|X|F(1)=|X| \left\{c_{n,t} + {n+t-1 \choose t}-{n+t-3 \choose t-2}\right\}.
\end{equation}
By \eqref{eq:1} and \eqref{eq:2}, we have the following inequality: 
\[c_{n,t} |X|^2 \geq |X|\left\{c_{n,t}+ {n+t-1 \choose t}-{n+t-3 \choose t-2}\right\}
\] 
Therefore,
\[|X| \ge 1+\frac{1}{c_{n,t}}\left\{{n+t-1\choose t}-{n+t-3\choose t-2}\right\}.\]
Moreover, equality in this inequality holds if and only if equality holds in \eqref{eq:2}, that is, for any $x,\not = y \in X$, $c_{n,t} + Q_{n,t}(\la x,y \ra)=0$.  Therefore the final assertion of the theorem follows.
\hfill{$\Box$}
\section{Some calculation results}\label{comp}
First we apply Theorem \ref{linear} to the case when $n=2$. Since $Q_{2,t}(x)=2\cos(t\arccos(x))$,
\[c_{2,t}=- \min_{-1\le x \le 1} 2\cos(t\arccos(x))=2.\]
Therefore, we have
\[b_{2,t}=1+\frac{1}{c_{2,t}}\left\{{t+1 \choose t}-{t-1\choose t-2}\right\}=2,\]
and so $A(2,t)\ge 2$. Let $x,y$ be two unit vectors with angular $\theta=j\pi/2e$ where $j$ is odd. Then $X=\{x,y\}$ is a harmonic index $(2e)$-design on $S^1$ and we have $A(2,t)=2$. Conversely let $X=\{x,y\} \subset S^1$ be a harmonic index $t$-design with two vertices. From the final assertion of Theorem \ref{linear}, $\cos t\la x,y\ra=-1$ must hold. Hence we see that $X$ must be obtained by the above construction.

Next we consider the case when $t=2$. Then since $Q_{n,2}(x)=\dis \frac{n+2}{2}(nx^2-1)$,
\[c_{2,t}=- \min_{-1\le x \le 1}Q_{n,2}(x)=\frac{n+2}{2}.\]
This value is given at $x=0$. Therefore, we have
\[b_{n,2}=1+\frac{1}{c_{n,2}}\left\{{n+1\choose 2}-{n-1\choose 0}\right\}=n,\]
and $A_{n,2}\ge n$. Let $X=\{e_i\mid 1\le i \le n\}$ be an orthonomal basis of $\R_n$. Then $X$ is a harmonic index $2$-design on $S^{n-1}$ and so we have $A(n,2)=n$. Conversely let $X$ be a harmonic index $2$-design on $S^{n-1}$ with $n$ vertices. Then From the final assertion of Theorem \ref{linear}, we see that, for any $x \not= y \in X$, $\la x,y\ra=0$. So $X$ must be an orthonormal basis of $\R_n$.

The first non-trivial case is the case when $n=3$ and $t=4$.
\begin{theo}\label{unique}
$A(3,4)=5$. Moreover a harmonic index 4-design with 5 vertices on $S^2$ is congruent to the following $X_0^{\pm}$ or any one of the configurations obtained by replacing some points in $X_0^{\pm}$ to its antipodal points:
\begin{eqnarray*}
X_0^{\pm}=&&\left\{\left(\frac{1}{35}\sqrt{525\pm70\sqrt{30}},\frac{1}{35}\sqrt{700\mp70\sqrt{30}},0\right),\right.\\
&&\left(\frac{1}{35}\sqrt{525\pm 70\sqrt{30}},\frac{\sqrt{5}-1}{140}\sqrt{700\mp70\sqrt{30}},\frac {\sqrt{10+2\sqrt{5}}}{140}\sqrt{700\mp70\sqrt{30}}\right),\\
&&\left(\frac{1}{35}\sqrt{525\pm70\sqrt{30}},-\frac{\sqrt{5}+1}{140}\sqrt{700\mp70\sqrt{30}},\frac {\sqrt{10-2\sqrt{5}}}{140}\sqrt{700\mp70\sqrt{30}}\right),\\
&&\left(\frac{1}{35}\sqrt{525\pm70\sqrt{30}},-\frac{\sqrt{5}+1}{140}\sqrt{700\mp 70\sqrt{30}},-\frac {\sqrt{10-2\sqrt{5}}}{140}\sqrt{700\mp70\sqrt{30}}\right),\\
&&\left.\left(\frac{1}{35}\sqrt{525\pm 70\sqrt{30}},\frac{\sqrt{5}-1}{140}\sqrt{700\mp 70\sqrt{30}},-\frac {\sqrt{10+2\sqrt{5}}}{140}\sqrt{700\mp 70\sqrt{30}}\right)\right\},
\end{eqnarray*}
Here double-sign is corresponding.
\end{theo}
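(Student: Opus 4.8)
The plan is to split Theorem \ref{unique} into three parts: the upper bound $A(3,4)\le 5$, the matching lower bound $A(3,4)\ge 5$, and the classification of the five-point extremal configurations. Throughout I would use that $Q_{3,4}(s)=9P_4(s)$ is a scalar multiple of the Legendre polynomial $P_4$, since $\lambda=(n-2)/2=1/2$ and the normalization forces $Q_{3,4}(1)=\dim\ha_4(\R^3)=9=9\,P_4(1)$.

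For the upper bound I would invoke Theorem \ref{const} directly. The regular pentagon is a spherical $4$-design on $S^1$ (a regular $m$-gon is a $t$-design iff $m\ge t+1$), and the four real roots of $Q_{3,4}$ are $s=\pm\sqrt{(15\pm2\sqrt{30})/35}$. Feeding the pentagon and a positive root $r$ into Theorem \ref{const} produces a $5$-point harmonic index $4$-design on $S^2$; a short computation shows that the two choices $r=\sqrt{(15\pm2\sqrt{30})/35}$ yield exactly $X_0^{+}$ and $X_0^{-}$, the first coordinate being $r$ and the remaining two coordinates being $\sqrt{1-r^2}$ times the pentagon vertices. This gives $A(3,4)\le 5$ and simultaneously exhibits the claimed extremal examples.

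For the lower bound I would first apply Theorem \ref{linear}. Since $c_{3,4}=-\min_{-1\le s\le 1}Q_{3,4}(s)=27/7$ (the minimum of $9P_4$ being attained at $s=\pm\sqrt{3/7}$), the inequality reads $|X|\ge 1+\tfrac{7}{27}\cdot 9=\tfrac{10}{3}$, hence $|X|\ge 4$; in particular configurations of fewer than four points are excluded outright. It remains to rule out $|X|=4$. Here I would write the four points as unknown unit vectors $p_1,\dots,p_4\in S^2$, fix a real orthonormal basis $f_1,\dots,f_9$ of $\ha_4(\R^3)$, and impose the nine design equations $\sum_{k=1}^4 f_i(p_k)=0$ together with the four sphere relations $|p_k|^2=1$. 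Using the $O(3)$-action I would gauge-fix (place $p_1$ at a pole and $p_2$ in a coordinate plane), saturate by $\prod_{i<j}|p_i-p_j|^2$ to enforce distinctness, and compute a Gröbner basis over $\mathbb{Q}$; showing that the resulting ideal is the unit ideal (equivalently that the real variety is empty) proves there is no $4$-point design, so $A(3,4)=5$.

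For uniqueness I would run the analogous zero-dimensional computation for five points: fifteen coordinate unknowns subject to the five sphere relations and nine harmonic equations, gauge-fixed against $O(3)$. Because $t=4$ is even and $f(-x)=f(x)$ for $f\in\ha_4(\R^3)$, replacing any $p_k$ by $-p_k$ preserves the design, so the analysis really takes place at the level of the five antipodal pairs; I would classify those line configurations and then restore the independent choice of antipode for each line. Reading off the Gröbner basis and matching its real solutions against $X_0^{\pm}$ up to congruence yields the stated classification. The main obstacle throughout is computational: keeping the Gröbner bases tractable (a good gauge choice and a well-adapted harmonic basis are essential), and then passing rigorously from the complex solution set over $\mathbb{Q}$ to the real configurations on the sphere—discarding degenerate and spurious components and confirming that every genuine real solution is congruent to one of the $X_0^{\pm}$. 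The explicit basis recorded in Appendix II is what makes the $|X|=5$ step effective.
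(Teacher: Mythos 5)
Your treatment of $A(3,4)=5$ itself matches the paper's proof: the upper bound comes from Theorem \ref{const} applied to the regular pentagon with a positive root of $Q_{3,4}$ (the two positive roots $\frac{1}{35}\sqrt{525\pm70\sqrt{30}}$ giving exactly $X_0^{+}$ and $X_0^{-}$), the bound $|X|\ge 4$ comes from Theorem \ref{linear} with $c_{3,4}=27/7$ and $b_{3,4}=10/3$, and four-point designs are excluded by a gauge-fixed Gr\"obner basis computation showing the ideal is the unit ideal. (Your extra saturation by $\prod_{i<j}|p_i-p_j|^2$ is harmless but unnecessary; in the paper the unsaturated ideal is already the whole ring.)

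The genuine gap is in the uniqueness part. You set up the correct five-point polynomial system, but then say you would ``read off the Gr\"obner basis and match its real solutions against $X_0^{\pm}$ up to congruence,'' and you yourself flag this passage as the main obstacle without resolving it. That passage is precisely where the paper's key idea lives, and your proposal does not contain it: a double count. On one side, the paper counts, by elementary spherical geometry, the gauge-fixed coordinate tuples (with $x_1=(1,0,0)$ and $x_2$ in the plane $z=0$) representing the \emph{known} configurations --- the two pentagons and all their antipodal modifications; using that exactly $8$ distinct inner products occur among these configurations and that any one inner product determines which configuration one is in, the count is $16\times 12\times 4\times 2=1536$. On the other side, it bounds the number of solutions of the gauge-fixed system above by $1536=2^9\cdot 3$, exploiting the triangular shape of the lexicographic Gr\"obner basis in Appendix II: factor $P_{16}$ into $f_1f_2$, verify by Sturm's theorem that $f_1$ and $f_2$ each have eight real roots, and propagate each root through $P_{15},\dots,P_1$, recording how many values each successive variable can take (keeping track of which $P_i$ are divisible by $f_1$ or lie in the ideal generated by the later ones). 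Since the known configurations are solutions and the two counts agree, every solution is a known one. Without this counting mechanism (or a substitute such as certified real-root isolation of the zero-dimensional ideal together with verified congruence checks), ``matching the real solutions'' is not a proof: a priori the variety could contain real points not congruent to $X_0^{\pm}$. Your further idea of first classifying the five antipodal line classes and then restoring signs is plausible, but it would require reformulating the equations so that they depend only on the lines, which your proposal does not carry out.
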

\begin{proof}
First we apply Theorem \ref{linear}. Since
\[\dis Q_{3,4}(x)=\frac{9}{8}(35x^4-30x^2+3)=\frac{9}{8}\left\{35\left(x^2-\frac{3}{7}\right)^2-\frac{24}{7}\right\},\]
$c_{3,4}=27/7$. So $b_{3,4}=10/3=3.333\ldots$ and we have $A(3,4)\ge 4$. Next in order to give an upper bound of $A(3,4)$, we apply Theorem \ref{const}. A regular 5-gon is a spherical $4$-design on $S^1$ (cf. \cite{BB99, Hong}).
Hence by Theorem \ref{const}, adjusting the radius of a regular 5-gon suitably and putting it on $S^2$, we obtain a harmonic index  $4$-design with 5 vertices on $S^2$. These two configurations obtained in this way by using two positive roots of $Q_{3,4}(s)$ are $X_0^+$ and $X_0^-$ in the assertion of the theorem. Thus we have $A(3,4)\le 5$. A problem is whether there exists a harmonic index $4$-design with 4 vertices on $S^2$. We can solve this problem by the direct calculation as follows.

Take the following basis of $\ha_4(\R^3)$.
\begin{eqnarray*}
H=&&\{x^3y-xy^3,x^3z-3xy^2z,3x^2yz-y^3z,x^4-6x^2y^2+y^4,4xz^3-3x^3z-xy^2z,\\
&&4yz^3-3x^2yz-3y^3z,6xyz^2-x^3y-xy^3,6x^2z^2-x^4-6y^2z^2+y^4,\\
&& 8z^4-24x^2z^2-24y^2z^2+3x^4+6x^2y^2+3y^4\}.
\end{eqnarray*}
Let $X$ be a four point subset on $S^2$. Since the property as harmonic index design is invariant under orthogonal transformations, by Lemma \ref{equ}, without loss of generality, we may put the points of $X$ as follows:
\[X=\{x_1=(1,0,0),x_2=(s_{21},s_{22},0),x_3=(s_{31},s_{32},s_{33}),x_4=(s_{41},s_{42},s_{43})\}.\]
Here $s_{21},s_{22},\ldots,s_{43}$ are variables and satisfy $\la x_i,x_i\ra=1$ for $i=1,\ldots, 4$. Let 
\[EQ_1=\{\la x_i,x_i\ra-1 \mid i=1,\ldots,4\}, \;EQ_2=\left\{\sum_{x\in X}f(x) \mid f \in H\right\}.\] 
By Definiton \ref{index}, $X$ is a harmonic index $4$-design if and only if, for any $f \in H$, $\sum_{x\in X}f(x)=0$. Thus, the common zeros of $EQ:=EQ_1 \cup EQ_2$ give exactly harmonic index $4$-designs. We calculated the ideal which is generated by $EQ$ in a multivariate polynomial ring $\R[s_{21},s_{22},\ldots,s_{43}]$ by Groebner bases function in a computational algebra system, Magma, and we found that the ideal equals to the whole of the ring. It means that there is no common zero of $EQ$ and therefore, that there does not exist a harmonic index $4$-design with 4 vertices on $S^2$.

Next we show the uniqueness of configurations giving $A(3,4)=5$. Suppose $t$ is even. Here we note that, for a harmonic index $t$-design, even if we replace some points of them to its antipodal points, it also becomes a hamonic index $t$-design. 

In order to prove the uniqueness, first we count the number of the configurations given in the assertion of the theorem. 
As a promise to count them, we put $X=\{x_1,x_2,\ldots,x_5\}$, and two points in its five points are put as $x_1=(1,0,0),x_2=(a,b,0)$, and we count the number of values $a,b$ and the coordinates of $x_3,x_4,x_5$ such that $X$ is congruent to any one of the configurations given in the assertion of the theorem. The positive roots of $Q_{3,4}(s)$ are $r_1=\dis \frac{1}{35}\sqrt{525+70\sqrt{30}}$ and $\dis r_2=\frac{1}{35}\sqrt{525-70\sqrt{30}}$. $X_0^{\pm}$ in the theorem are exactly ones obtained by Theorem \ref{const} with $r_1$ and $r_2$, respectively, and they are cited as the small 5-gon and the big 5-gon on $S^2$, respectively. First, on $X_0^{\pm}$ and the configurations obtained by replacing some points in $X_0^{\pm}$ to its antipodal points (which are said to be the derived ones by this operation), we can confirm by numerical calculation that exactly 8 values appear among $I(X)$'s for the configurations $X$'s which are given in the assertion of the theorem. The 8 values of inner products are given by the following pairs of two
  points: two vertices neighboring on the small 5-gon, two vertices on the diagonals of the small 5-gon, a vertex of the small 5-gon and the antipodal points of the neighbor vetices on the 5-gon, a vertex of the small 5-gon and the antipodal points of the diagonal vertices on the 5-gon, and 4 kinds of the corresponding ones for the big 5-gon. The number of them is 8 in all.  
In particular, inner products appearing in these pairs of two points are all different. This fact means that, once we determine a distance between a pair of two points, it is determined which configuration the whole five points are on , the small 5-gon and the derived one, or the big 5-gon and the derived one. 

First fix $x_1=(1,0,0)$. For each of the above 8 distances, there are two choices to put $x_2$ by the distance from $x_1$ to a clockwise direction or a counterclockwise direction on the equator. Hence there are 16 choices as a position of $x_2$ overall. From the above fact, by the distance $x_1x_2$, it is determined which configuration the whole five points are on, the small 5-gon and the derived one, or the big 5-gon and the derived one. Hence after determining the distance $x_1x_2$, there are 6 positions to put $x_3$ among the remaining vertices of 5-gons and the derived one, and the positions all have different combinations of distances from $x_1$ and $x_2$. For each combination of distances, we need to choose which $x_3$ is on, the north hemisphere or the south hemisphere, so we have two choices. Finally, we have 12 choices as positions of $x_3$. Similarly, after determining the coordinates of $x_1,x_2$ and $x_3$, we see that there are 4 combinations of distances $x_1x_4,
 x_2x_4,x_3x_4$. Since a coordinate of a point in $\R^3$ is determined by distances from linearly independent three points, a coordinate of $x_4$ is determined by a combination of distances. Similarly, after determining the coordinates of $x_1,\ldots,x_4$, there are 2 positions to put $x_5$. After all, we see that there are $16\times 12 \times 4 \times 2=1536$ combinations for coordinates of $x_2,\ldots,x_5$ overall.

Next, we carry out a Groebner basis calculation for the case of five points in a similar way to the case of four points. Let $H$ be the same as the above and 
\[X=\{x_1=(1,0,0),x_2=(s_1,s_2,s_3),x_3=(s_4,s_5,s_6),x_4=(s_7,s_8,s_9),x_5=(s_{10},s_{11},s_{12})\},\]
where $s_3=0$. Also let 
\[EQ_1'=\{\la x_i,x_i\ra-1 \mid i=1,\ldots,5\},\; EQ_2'=\left\{\sum_{x\in X}f(x) \mid f \in H\right\}.\] 
Similar to the case of four points, the common zeros of $EQ':=EQ_1' \cup EQ_2'$ give exactly harmonic index $4$-designs.
For the ideal generated by $EQ'$ in a multivariate polynomial ring $\R[s_1,s_2,s_4\ldots,s_{12}]$, we calculate a Groebner basis of a lexicographical order using Magma. the calculation result is given in Appendix 2.

First we factorize $P_{16}$ as follows:
\[P_{16}=\frac{1}{866761}(931s_{12}^8-1302s_{12}^6+627s_{12}^4-126s_{12}^2+9)(931s_{12}^8-1428s_{12}^6+732s_{12}^4-144s_{12}^2+9).\]
Put
\begin{eqnarray*}
f_1=931s_{12}^8-1302s_{12}^6+627s_{12}^4-126s_{12}^2+9,\\
f_2=931s_{12}^8-1428s_{12}^6+732s_{12}^4-144s_{12}^2+9.
\end{eqnarray*}
We checked by Sturm's theorem that $f_1$ and $f_2$ have the eight real zeros.
First for eight zeros of $f_1$ in $s_{12}$, we investigate possible values for $s_{1},\ldots,s_{11}$.
Since $P_{15}$ is divided  by $f_1$, we do not need to consider $P_{15}$ for zeros of $f_1$. For each value of $s_{12}$, the value of $s_{11}$ is determined by a polynomial $P_{14}$ in $s_{11}$, and there are at most 4 possible values. Similarly, for each possibility of $s_{12}$ and $s_{11}$, $s_{10}$ has at most 2 possibilities by $P_{13}$. We are to determine the number of possibilities for $s_{1},\ldots,s_{11}$ in this way. $P_{12}$ is divided by $f_1$. 
$s_9$ has at most 4 possibilities by $P_{11}$. $s_8$ has at most a possibility by $P_{10}$, $s_7$ has at most a possibility by $P_6$, $s_6$ has at most 2 possibilities by $P_5$, $s_5$ has at most a possibility by $P_4$, $s_4$ has at most a possibility by $P_3$, $s_2$ has at most 2 possibilities by $P_2$. $s_1$ has at most a possibility by $P_1$. Finally, from the zeros of $f_1$, the total number of possibilities for $s_{1},\ldots,s_{11}$ is $8\times4^2\times 2^3$. Similarly we count the number of possibilities in the case of 8 zeros of $f_2$. Since $P_{15}$ is not divided by $f_2$, for each zero of $f_2$, $s_{11}$ has at most 2 possibilities by $P_{15}$. $s_{10}$ has at most 2 possibilities by $P_{13}$, $s_9$ has at most 2 possibilities by $P_{12}$. Since $P_8$, $P_9$, $P_{10}$ and $P_{11}$ are in the ideal generated by $P_{12}$, $P_{13}$, $P_{14}$, $P_{15}$ and $P_{16}$, $s_8$ has at most 2 possibilities by $P_7$. $s_7$ has at most a possibility by $P_6$, $s_6$ has at most 2
  possibilities by $P_5$, $s_5$ has at most a possibility by $P_4$, $s_4$ has at most a possibility by $P_3$, $s_2$ has at most 2 possibilities by $P_{2}$. $s_1$ has at most a possibility by $P_1$. Finally, for the zeros of $f_2$, the total number of possibilities is $8\times 2^6$. In conclusion, the number of the common zeros of $EQ'$ is at most $2^3\times4^2\times + 8\times 2^6=2^9\times3$, which is equal to the number of examples given in the assertion of the theorem. This completes the proof of the theorem.
\end{proof}

From the Groebner basis calculation, which is used in the proof of Theorem \ref{unique}, and the existence of an antipodal half of vertices of icosahedron, we see $5\le A(3,8)\le 6$. Other exact values of $A(n,t)$ are all open.
\section{On existence of tight harmonic index design}\label{tight}
In this section, we investigate the conditions for the existence of tight harmonic index designs. First we show the following:
\begin{cor}\label{2dis}
Suppose that  $b_{n,t}$ is a natural number and $X$ is a harmonic index $t$-design on $S^{n-1}$ with $|X|=b_{n,t}$.
Then there exists some $\alpha \in [-1,1]$ such that $I(X)=\{\pm \alpha\}$.
\end{cor}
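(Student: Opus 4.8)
The plan is to combine the equality condition of Theorem~\ref{linear} with the classical fact that the relative extrema of a Gegenbauer polynomial grow strictly in absolute value as one moves toward the endpoints. Since $|X|=b_{n,t}$ realizes equality in \eqref{fis}, the final assertion of Theorem~\ref{linear} says that every $a\in I(X)$ satisfies $Q_{n,t}(a)=-c_{n,t}=\min_{-1\le x\le 1}Q_{n,t}(x)$. Thus $I(X)$ is contained in the set $M$ of global minimizers of $Q_{n,t}$ on $[-1,1]$, and it suffices to prove that $M=\{\pm\alpha\}$ for a single $\alpha\ge 0$.

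When $n=2$ we have $b_{2,t}=2$, so $|X|=2$ and $I(X)$ consists of a single inner product, which is trivially of the desired form; hence I may assume $n\ge 3$. In that case $Q_{n,t}(x)=A_{n,t}C_t^{(n-2)/2}(x)$ with $A_{n,t}>0$ is an even polynomial of degree $t$ with $Q_{n,t}(\pm1)=\dim\ha_t(\R^n)>0$ equal to its maximum on $[-1,1]$. Its derivative is a positive multiple of $C_{t-1}^{n/2}(x)$, which has exactly $t-1$ simple zeros in $(-1,1)$; these are the critical points of $Q_{n,t}$, alternating between local maxima and minima. By evenness, $0$ is a critical point and the remaining ones occur in pairs $\pm x_1,\dots,\pm x_{t/2-1}$ with $0<x_1<\cdots<x_{t/2-1}<1$ (for $t=2$ the only critical point is $0$ and $M=\{0\}$, so one may take $t\ge 4$).

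The heart of the argument is a monotonicity estimate. Writing the Gegenbauer equation as $(1-x^2)Q_{n,t}''-(n-1)xQ_{n,t}'+t(t+n-2)Q_{n,t}=0$ and setting
\[
\phi(x):=Q_{n,t}(x)^2+\frac{1-x^2}{t(t+n-2)}\,Q_{n,t}'(x)^2,
\]
a direct substitution gives $\phi'(x)=\frac{2(n-2)x}{t(t+n-2)}\,Q_{n,t}'(x)^2$, which is nonnegative on $[0,1]$ and vanishes only at the critical points, so $\phi$ is strictly increasing on $[0,1]$. Since $\phi$ agrees with $Q_{n,t}^2$ at every critical point, the values $Q_{n,t}(0)^2<Q_{n,t}(x_1)^2<\cdots<Q_{n,t}(x_{t/2-1})^2$ form a strictly increasing sequence. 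Because $Q_{n,t}'(1)>0$, the function increases onto its positive maximum $Q_{n,t}(1)$ on $(x_{t/2-1},1)$, so the outermost critical point $x_{t/2-1}$ is a local minimum; by the strict monotonicity just established it is the unique minimizer in $[0,1]$. Setting $\alpha:=x_{t/2-1}$ and invoking evenness yields $M=\{\pm\alpha\}$, whence $I(X)\subseteq\{\pm\alpha\}$.

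I expect the main obstacle to be the monotonicity step: one must verify the identity for $\phi'$ from the differential equation and then argue carefully that the outermost interior critical point is a \emph{minimum} rather than a maximum, which is exactly where the sign of $Q_{n,t}'(1)$ and the evenness of $Q_{n,t}$ enter. The degenerate Chebyshev case $n=2$, where the minimizer set of $Q_{n,t}$ is genuinely larger, must be dispatched separately by the cardinality bound $|X|=2$ as above.
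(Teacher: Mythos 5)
Your proof is correct, and its skeleton matches the paper's: both start from the equality condition in Theorem \ref{linear} to conclude that every element of $I(X)$ is a global minimizer of $Q_{n,t}$ on $[-1,1]$, and both then show that this minimizer is unique up to sign because the relative extrema of Gegenbauer polynomials grow in absolute value away from the origin. The difference lies in how that last fact is obtained: the paper simply cites Szeg\"o's book for the monotonicity of local extrema, whereas you prove it from scratch via the Gegenbauer differential equation $(1-x^2)Q_{n,t}''-(n-1)xQ_{n,t}'+t(t+n-2)Q_{n,t}=0$ and the Sonin-type function $\phi=Q_{n,t}^2+\frac{1-x^2}{t(t+n-2)}\,(Q_{n,t}')^2$, whose derivative $\frac{2(n-2)x}{t(t+n-2)}(Q_{n,t}')^2$ is indeed nonnegative on $[0,1]$ (the computation checks out, and the identification of the outermost interior critical point as a local minimum via $Q_{n,t}'(1)>0$ and simplicity of the zeros of $Q_{n,t}'$ is sound). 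Your self-contained version buys two things the citation does not: it makes the key lemma verifiable within the paper, and it isolates exactly where the argument breaks for $n=2$ --- there $\phi'\equiv 0$, all local minima of the Chebyshev polynomial equal $-2$, and the minimizer is far from unique --- which forces the separate (and genuinely necessary) disposal of that case via $b_{2,t}=2$; the paper's proof silently ignores this degenerate case, so your treatment is actually more careful. One small caveat common to both arguments: what is really proved is $I(X)\subseteq\{\pm\alpha\}$, and the equality $I(X)=\{\pm\alpha\}$ asserted in the statement (both values occurring) does not follow from either argument; this imprecision is in the paper itself and is not a defect introduced by your proof.
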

\begin{proof}
By the final assertion of Theorem \ref{linear}, for any $\alpha \in I(X)$, $Q_{n,t}(\alpha)$ must attain the minimum of $Q_{n,t}(x)$ on $[-1,1]$. But it is known that local minima of Gegenbauer polynomials change monotonously from the origin (cf. Szeg\"o \cite[168 page]{Szego}). Hence the minimum point $\alpha$ is unique. Therefore the assertion of the corollary is concluded. 
\end{proof}
Let $J_\alpha(z)$ is the Bessel function of the fist kind for parameter $\alpha$. Refer to \cite{AAR,Szego} on Bessel funtion. $j_{\alpha,k}$ denotes the $k$-th positive root of $J_\alpha(z)$.
\begin{proposition}\label{shu}
Let $\dis F_n(z)=\left(\frac{z}{2}\right)^{-\frac{n-3}{2}}J_{\frac{n-3}{2}}(z)$. Then fix $n$ and make $t$ tend to infinity, then
\[\dis b_{n,t} \to 1-\frac{1}{F_n\left(j_{\frac{n-1}{2},1}\right)}.\]
\end{proposition}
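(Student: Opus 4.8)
The plan is to translate $b_{n,t}$ into a ratio of two values of a single Gegenbauer polynomial and then pass to the limit through the Mehler--Heine asymptotics for ultraspherical polynomials.

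First I would simplify $b_{n,t}$. Writing $\lambda=\frac{n-2}{2}$ and $d_{n,t}=\binom{n+t-1}{t}-\binom{n+t-3}{t-2}=Q_{n,t}(1)$, the definition gives $b_{n,t}=1+d_{n,t}/c_{n,t}=1-Q_{n,t}(1)/\min_{[-1,1]}Q_{n,t}$, since $\min Q_{n,t}<0$. Because $Q_{n,t}=A_{n,t}C_t^{\lambda}$ with $A_{n,t}>0$, the normalization cancels in the ratio, so
\[
b_{n,t}=1-\frac{C_t^{\lambda}(1)}{\min_{-1\le x\le1}C_t^{\lambda}(x)}.
\]
Thus everything reduces to computing $\lim_{t\to\infty}\bigl(\min_{[-1,1]}C_t^{\lambda}\bigr)/C_t^{\lambda}(1)$.

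The key idea is that, for even $t$, the global minimum of $C_t^{\lambda}$ sits in a shrinking window around the endpoint $x=1$ (and, by symmetry, $x=-1$), at an angle $\theta=z/t$ with $z$ bounded. Writing $x=\cos(z/t)$ and invoking the Mehler--Heine limit,
\[
\frac{C_t^{\lambda}(\cos(z/t))}{C_t^{\lambda}(1)}\ \longrightarrow\ \Gamma\!\left(\tfrac{n-1}{2}\right)\left(\tfrac{z}{2}\right)^{-\frac{n-3}{2}}J_{\frac{n-3}{2}}(z)=:F_n(z)
\]
uniformly on compact subsets of $[0,\infty)$ (the constant is fixed by $C_t^{\lambda}(1)\sim t^{2\lambda-1}/\Gamma(2\lambda)$, and this $F_n$ is normalized so that $F_n(0)=1$). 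Since $\frac{d}{dz}\bigl[(z/2)^{-\frac{n-3}{2}}J_{\frac{n-3}{2}}(z)\bigr]$ is proportional to $-(z/2)^{-\frac{n-3}{2}}J_{\frac{n-1}{2}}(z)$, the first positive critical point of $F_n$ is the first zero $j_{\frac{n-1}{2},1}$ of $J_{\frac{n-1}{2}}$; and because the oscillation envelope decays like $z^{-\frac{n-2}{2}}$ (for $n\ge3$; $n=2$ is the elementary Chebyshev case already treated, where $F_2\propto\cos z$), this first trough is the global minimum of $F_n$ on $(0,\infty)$. Granting that $\bigl(\min_{[-1,1]}C_t^{\lambda}\bigr)/C_t^{\lambda}(1)\to\min_{z>0}F_n(z)=F_n(j_{\frac{n-1}{2},1})$, the displayed reduction gives $b_{n,t}\to1-1/F_n(j_{\frac{n-1}{2},1})$.

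The main obstacle is to justify interchanging the limit $t\to\infty$ with the minimization over $[-1,1]$, i.e.\ to rule out a minimizer escaping the scaling window. I would split $[0,\pi]$ into three regions. On a fixed window $\theta\in[0,R/t]$, uniform Mehler--Heine convergence forces the minimum over the window to converge to $\min_{[0,R]}F_n$. On the bulk $\theta\in[\delta,\pi-\delta]$, the interior asymptotics give $|C_t^{\lambda}(\cos\theta)|=O(t^{\lambda-1})$ uniformly, while $C_t^{\lambda}(1)\sim t^{2\lambda-1}/\Gamma(2\lambda)$, so the bulk ratio is $O(t^{-\lambda})\to0$ and hence eventually larger than the fixed negative number $F_n(j_{\frac{n-1}{2},1})$. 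The delicate part is the intermediate range $R/t\le\theta\le\delta$: one needs a uniform estimate showing the scaled polynomial stays, up to the decaying Bessel envelope, shallower than the first trough, so that choosing $R$ large enough no deeper trough can appear. I expect this uniform transitional bound to be the crux; it can be extracted from a uniform (Hilb-type) asymptotic for $C_t^{\lambda}(\cos\theta)$ valid on $0<\theta\le\pi/2$, together with the monotone decay of the successive extrema of Gegenbauer polynomials already used in Corollary \ref{2dis}, which simultaneously guarantees that the minimizing trough is unique.
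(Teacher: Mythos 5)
Your core strategy coincides with the paper's: both reduce $b_{n,t}$ to the normalization-free ratio $Q_{n,t}(1)/\min_{[-1,1]}Q_{n,t}$ and evaluate the limit via Mehler--Heine asymptotics. Where you differ is in the localization of the minimizer, and there your proposal has a gap that the paper's argument avoids entirely. The paper works in the Jacobi normalization $P_{n,t}=P_t^{(\alpha,\alpha)}$, $\alpha=\tfrac{n-3}{2}$, and uses Szeg\"o's identity $\tfrac{d}{ds}P_{n,t}(s)=\tfrac12(t+n-2)P_{n+2,t-1}(s)$ together with the monotone growth of the successive extrema away from the origin: the global minimum is therefore attained \emph{exactly} at $\cos\theta_{n+2,1}^{(t-1)}$, the largest zero of $P_{n+2,t-1}$, and Szeg\"o's zero asymptotics give $t\,\theta_{n+2,1}^{(t-1)}\to j_{\frac{n-1}{2},1}$. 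Thus the minimizer never leaves the Mehler--Heine window, and uniform convergence on compacta finishes the proof; no bulk estimate, no transitional range, no interchange-of-limits issue. The monotone-extrema fact that you cite only as a side remark (``simultaneously guarantees that the minimizing trough is unique'') is precisely the tool that dissolves what you call the crux: used as the main step, it makes your three-region decomposition and the unproven uniform Hilb-type bound on $R/t\le\theta\le\delta$ unnecessary. As written, your proposal is incomplete at exactly that step; the fix is to promote the extrema-monotonicity argument (plus the derivative identity and zero asymptotics) from auxiliary to central.

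On the other hand, your limiting constant disagrees with the proposition as stated --- and you are right, the paper is not. Tracking $P_{n,t}(1)=\binom{t+\alpha}{t}\sim t^{\alpha}/\Gamma(\alpha+1)$ (equivalently your $C_t^{\lambda}(1)\sim t^{2\lambda-1}/\Gamma(2\lambda)$) yields $b_{n,t}\to 1-1/\bigl(\Gamma(\tfrac{n-1}{2})\,F_n(j_{\frac{n-1}{2},1})\bigr)$, with the extra factor $\Gamma(\tfrac{n-1}{2})$; the paper's proof instead asserts $\binom{t+\alpha}{t}\sim t^{\alpha}$, silently dropping $\Gamma(\alpha+1)=\Gamma(\tfrac{n-1}{2})$, which is why its stated limit has no Gamma factor. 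The two agree only when $\Gamma(\tfrac{n-1}{2})=1$, i.e.\ $n=3,5$. Two checks confirm your version. For $n=2$ the paper itself computes $b_{2,t}=2$ for all $t$; your formula gives $\Gamma(\tfrac12)F_2(z)=\cos z$, $j_{1/2,1}=\pi$, hence limit $1-1/(-1)=2$, whereas the paper's formula gives $1+\sqrt{\pi}\approx 2.77$. For $n=4$ one has $Q_{4,t}\propto U_t$ (Chebyshev of the second kind), so $\min U_t/U_t(1)\to\min_{z>0}\tfrac{\sin z}{z}=\sin(j_{3/2,1})/j_{3/2,1}\approx-0.2172$ and $b_{4,t}\to 5.603\ldots$, consistent with the monotonically increasing values $5,\,5.29,\ldots,5.56$ in Appendix I, while the paper's claimed convergence value $5.0796\ldots$ is already exceeded at $t=6$ (the paper's puzzled remark about non-monotonicity for $n=4$ is an artifact of this error). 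The same correction rescales the quoted limits $b_6,\dots,b_{10}$ (roughly $12.6,\,18.1,\,25.3,\,34.9,\,47.5$ instead of $16.4,\,35.1,\,81.9,\,204.5,\,541.7$), which in particular undercuts the paper's subsequent claim that $b_n$ exceeds the equiangular-lines bound $n(n+1)/2$ for $n=8,9,10$: with the corrected values it does not, so the non-existence assertion for those dimensions needs a different justification.
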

\begin{proof}
Let $\dis \alpha=\frac{n-3}{2}$ and $P_{n,t}(s)=B_{n,t}Q_{n,t}(s)$. Here the normalization factor $B_{n,t}$ is taken so as to be $P_{n,t}(1)={t+\alpha \choose t}$. Then under the notation in Szeg\"o\cite{Szego}, $P_{n,t}(s)=P_{t}^{(\alpha,\alpha)}$. Also denote by $c_{n,t}'$ the minimum of $P_{n,t}(s)$ on $[-1,1]$ times -1. Then
\[\frac{1}{c_{n,t}}\left\{{n+t-1\choose t}-{n+t-3\choose t-2}\right\}=\frac{B_{n,t}}{B_{n,t}c_{n,t}}\left\{{n+t-1\choose t}-{n+t-3\choose t-2}\right\}=\frac{1}{c_{n,t}'}{t+\alpha \choose t}=\frac{P_{n,t}(1)}{c_{n,t}'}.\]
Since $\dis P_{n,t}(1)={t+\alpha \choose t}\sim t^\alpha$, $\dis \lim_{t\to \infty} b_{n,t}=\lim_{t\to \infty} 1-\frac{t^\alpha}{c_{n,t}'}$. By \cite[page 63]{Szego},
\[\frac{d}{ds}P_{n,t}(s)=\frac{1}{2}\left(t+n-2\right)P_{n+2,t-1}(s).\]
Since local minima of $P_{n,t}(s)$ change monotonously from the origin, the minimum is attained at the maximum root of $P_{n+2,t-1}(s)$. Let $\dis -1<x_{n,t}^{(t)}<x_{n,2}^{(t)}<\cdots<x_{n,1}^{(t)}<1$ be the roots of $P_{n,t}(s)$, and set $\dis x_{n,i}^{(t)}=\cos\theta_{n,i}^{(t)} \;\left(0<\theta_{n,i}^{(t)}<\pi\right)$. So the minimum of $P_{n,t}(s)$ is  $\dis c_{n,t}'=P_{n,t}\left(x_{n+2,1}^{(t-1)}\right)=P_{n,t}\left(\cos \theta_{n+2,1}^{(t-1)}\right)$. The following convergence is well-known (\cite[192 page]{Szego}):
\begin{equation}\label{limit}
\lim_{t \to \infty} t^{-\alpha}P_{n,t}\left(\cos \frac{z}{t}\right)=\left(\frac{z}{2}\right)^{-\alpha}J_{\alpha}(z).
\end{equation}
This convergence is uniform in every bounded region of the complex $z$-plane. Also by \cite[192 page]{Szego}, $\lim_{t\to \infty}t\theta_{n,i}^{(t)}=j_{\alpha,i}$. Finally, 
\begin{equation}\label{deno}
\dis \lim_{t\to \infty} b_{n,t}=\lim_{t\to \infty} 1-\frac{t^\alpha}{c_{n,t}'}=1-\lim_{t\to \infty} \frac{t^\alpha}{c_{n,t}'}=1-\lim_{t\to \infty} \frac{t^\alpha}{P_{n,t}\left(\cos \theta_{n+2,1}^{(t-1)}\right)}.
\end{equation}
Put $z_m=m\theta_{n+2,1}^{(m-1)}$. Then 
\[\lim_{m \to \infty}z_m=\lim_{m \to \infty}\frac{m}{m-1}(m-1)\theta_{n+2,1}^{(m-1)}=\lim_{m \to \infty}\frac{m}{m-1}\lim_{m \to \infty}(m-1)\theta_{n+2,1}^{(m-1)}=1\cdot j_{\alpha+1,1}=j_{\alpha+1,1}.\]
Since the convergence is uniform, 
\begin{eqnarray}
\lim_{t\to \infty}t^{-\alpha}P_{n,t}\left(\cos \theta_{n+2,1}^{(t-1)}\right)&=&\lim_{t\to \infty}t^{-\alpha}P_{n,t}\left(\cos \frac{t\theta_{n+2,1}^{(t-1)}}{t}\right)=\lim_{m \to \infty}\lim_{t\to \infty} t^{-\alpha}P_{n,t}\left(\cos \frac{z_m}{t}\right) \nonumber\\
&=& \lim_{m \to \infty}\left(\frac{z_m}{2}\right)^{-\alpha}J_{\alpha}(z_m)=F_n(j_{\alpha+1,1}). \label{saigo}
\end{eqnarray}
By (\ref{deno}) and (\ref{saigo}), the proof of the proposition is completed. 
\end{proof}
The set of lines passing through the origin in $\R^n$ is called \textit{equiangular lines} if any distinct two lines in the set make the same angle. If $X \subset \R^n$ satisfies $I(X)=\{\pm \alpha\}$, the set of lines combining the origin and the points of $X$ forms equiangular lines. Since $x$ and $-x$ give the same line, we obtain equiangular lines with at least $|X|/2$ lines from this construction. If a tight harmonic index $t$-design $X$ exists, then $I(X)=\{\pm \alpha\}$ by Corollary \ref{2dis}, and since $X=-X$ does not hold by the tightness, we obtain equiangular lines with at least $|X|/2+1$ lines. It is well-known that the cardinality of equiangular lines in $\R^n$ is bounded above by $n(n+1)/2$ (\cite{Lemmens}). For small $n$'s, calculating the covergence value of Proposition \ref{shu}, we have $b_3=3.482871935 \;(6)$, $b_4=5.079602836 \;(10)$, $b_5=8.559751097\;(15)$, $b_6=16.42679115\;(21)$, $b_7=35.11842602\;(28)$, $b_8=81.85047703\;(36)$, $b_9=204.5294426\;(45)
 $, $b_{10}=541.6547218\;(55)$ where the parentheses denotes the value $n(n+1)/2$ for each $n$. Hence we see that, for $n=8,9,10$, a tight harmonic index $t$-design does not exist for large enough $t$.
Also we note that $b_{n,t}$ is not monotonously increasing in the case when $n=4$.

From now on, we consider the case when $t=4$.
\begin{proposition}\label{4tight}
\[b_{n,4}=\frac{(n+1)(n+2)}{6}.\]
In particular, $b_{n,4}$ is an integer if and only if $n$ is not divided by 3. Furthermore, in that case, for $X \subset S^{n-1}$ with $|X|=b_{n,4}$, the following are equivalent:
\begin{enumerate}[(i)]
\item $X$ is a hamonic index $4$-design,
\item $I(X) \subset \left\{\pm \sqrt{\frac{3}{n+4}}\right\}$.
\end{enumerate}
\end{proposition}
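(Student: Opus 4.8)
The plan is to reduce everything to an explicit description of the degree-$4$ Gegenbauer polynomial $Q_{n,4}$ together with the location and value of its minimum, and then to read off both the formula for $b_{n,4}$ and the equivalence from Theorem~\ref{linear} and Lemma~\ref{equ}. First I would write $Q_{n,4}(x)$ in closed form: up to the positive normalizing constant $A_{n,4}$ it is the Gegenbauer polynomial $C_4^{(n-2)/2}(x)$, an even quartic of the shape $ax^4-bx^2+c$ with $a,b,c>0$. Its stationary points on $[-1,1]$ are $x=0$ (a local maximum, second derivative $-2b<0$, value $c>0$) and $x=\pm\sqrt{b/2a}$ (the two global minima, where the value is negative). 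A direct computation of the Gegenbauer coefficients gives $b/2a=3/(n+4)$, so the minimum is attained exactly at $x=\pm\sqrt{3/(n+4)}$; since $3/(n+4)\le 1$ always, this point lies in $[-1,1]$, matching the value appearing in (ii).

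Evaluating $-\min_{[-1,1]}Q_{n,4}$ then yields $c_{n,4}$. Because the normalizing factor $A_{n,4}$ cancels between $Q_{n,4}(1)=\dim\ha_4(\R^n)$ and $c_{n,4}$ in the definition of $b_{n,4}$, I only need the ratios of the Gegenbauer coefficients, not $A_{n,4}$ itself. Substituting into
\[
b_{n,4}=1+\frac{1}{c_{n,4}}\left\{\binom{n+3}{4}-\binom{n+1}{2}\right\}
\]
and simplifying gives $b_{n,4}=1+\tfrac{1}{3}(n-1)\cdot\tfrac{n+4}{2}=\frac{(n+1)(n+2)}{6}$. I expect the messiest step to be this algebraic simplification, though it is entirely routine once the coefficient ratios are in hand; as a sanity check, at $n=3$ it reproduces the paper's values $c_{3,4}=27/7$, minimizer $\pm\sqrt{3/7}$, and $b_{3,4}=10/3$.

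For the integrality statement, $(n+1)(n+2)$ is a product of two consecutive integers and hence always even; it is divisible by $3$ precisely when $n+1$ or $n+2$ is, i.e. when $n\not\equiv 0\pmod 3$. Thus $b_{n,4}=(n+1)(n+2)/6$ is an integer if and only if $3\nmid n$.

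Finally, for the equivalence I would argue both directions. For (i)$\Rightarrow$(ii): if $X$ is a harmonic index $4$-design with $|X|=b_{n,4}$ then equality holds in (\ref{fis}), so by the final assertion of Theorem~\ref{linear} every $a\in I(X)$ satisfies $Q_{n,4}(a)=\min_{[-1,1]}Q_{n,4}$; since this minimum is attained only at $\pm\sqrt{3/(n+4)}$ (equivalently, invoking Corollary~\ref{2dis}), we get $I(X)\subseteq\{\pm\sqrt{3/(n+4)}\}$. For (ii)$\Rightarrow$(i): writing $\alpha=\sqrt{3/(n+4)}$ and using that $Q_{n,4}$ is even, so $Q_{n,4}(\alpha)=Q_{n,4}(-\alpha)=-c_{n,4}$, every off-diagonal pair contributes exactly $-c_{n,4}$, whence by Lemma~\ref{equ} and $\dim\ha_4(\R^n)=(b_{n,4}-1)c_{n,4}$,
\[
\sum_{x,y\in X}Q_{n,4}(\langle x,y\rangle)=|X|\,Q_{n,4}(1)-(|X|^2-|X|)\,c_{n,4}=|X|\,c_{n,4}\left(b_{n,4}-|X|\right),
\]
which vanishes since $|X|=b_{n,4}$, so $X$ is a harmonic index $4$-design. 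The only genuinely delicate point is the uniqueness of the minimizer needed in (i)$\Rightarrow$(ii), but this is precisely Corollary~\ref{2dis} (monotonicity of the local minima of Gegenbauer polynomials), so no new work is required.
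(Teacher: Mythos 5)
Your proposal is correct and follows essentially the same route as the paper: write $Q_{n,4}$ explicitly as an even quartic, complete the square to locate the minimum at $x^2=\frac{3}{n+4}$ with value $-c_{n,4}=-\frac{n(n+1)(n+6)}{4(n+4)}$, plug into the Fisher bound to get $b_{n,4}=\frac{(n+1)(n+2)}{6}$, and derive the equivalence from the equality case of Theorem~\ref{linear} together with Lemma~\ref{equ}. The only difference is one of completeness: the paper disposes of the equivalence with a single citation of the final assertion of Theorem~\ref{linear}, whereas you correctly note that the direction (ii)$\Rightarrow$(i) is not literally covered by that assertion (whose hypothesis already assumes $X$ is a design) and supply the short counting computation via Lemma~\ref{equ} that makes it rigorous.
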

\begin{proof}
\[Q_{n,4}(x)=\frac{n(n+6)}{24}\{(n+2)(n+4)x^4-6(n+2)x^2+3\}=\frac{n(n+6)}{24} \left\{  (n+2)(n+4)\left(s^2 -\frac{3}{n+4}\right)^2- \frac{6(n+1)}{n+4} \right \}.\]
Hence on $[-1,1]$, $Q_{n,4}(x)$ attains the minimum $c_{n,4}=\dis -\frac{n(n+1)(n+6)}{4(n+4)}$ at $\dis x^2=\frac{3}{n+4}$. Therefore, we have $\dis b_{n,4}=\frac{(n+1)(n+2)}{6}$. It is clear that $b_{n,4}$ is an integer if and only if $n$ is not a multiple of 3. Also the latter of the assertion in the proposition follows from the final assertion of Theorem \ref{linear}.
\end{proof}
\begin{lem}[Musin\cite{Musin}]\label{mu}
Let $N$ be a natural number and $0< \alpha< 1$. Then the following are equivalent:
\begin{enumerate}[(i)]
\item there exists $X \subset S^{n-1}$ with $|X|=N$ and $I(X)\subset \{\pm \alpha\}$,
\item there exists $Y \subset S^{n-2}$ with $|Y|=N-1$ and $\dis I(Y)\subset \left\{\frac{\alpha}{1+\alpha},\frac{-\alpha}{1-\alpha} \right\}$. Moreover if  $1/2<\alpha<1$, then we may suppose $\dis I(Y)\subset \left\{\frac{\alpha}{1+\alpha}\right\}$.
\end{enumerate}
\end{lem}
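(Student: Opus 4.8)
The plan is to set up an explicit, mutually inverse pair of constructions between configurations on $S^{n-1}$ and on $S^{n-2}$, by viewing $S^{n-2}$ as the equator of $S^{n-1}$ orthogonal to a fixed pole and using radial projection. All the content reduces to a single inner-product computation carried out in both directions; the only points requiring care are a sign-normalization and a positivity observation for the ``moreover'' clause.

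For the implication (i) $\Rightarrow$ (ii), I would first single out one point $x_0 \in X$. Replacing any $x_i$ with $\la x_i, x_0\ra = -\alpha$ by its antipode $-x_i$ leaves $I(X)$ inside $\{\pm\alpha\}$ and keeps the $N$ points distinct (since $\alpha<1$ rules out $-x_i = x_k$), so I may assume $\la x_i, x_0\ra = \alpha$ for every $x_i \neq x_0$. Then for each such $x_i$ the vector $x_i - \alpha x_0$ is orthogonal to $x_0$ and has norm $\sqrt{1-\alpha^2}$, so $y_i := (x_i - \alpha x_0)/\sqrt{1-\alpha^2}$ lies on the unit sphere $S^{n-2}$ of $x_0^\perp$. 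A direct expansion gives
\[
\la y_i, y_j\ra = \frac{\la x_i, x_j\ra - \alpha^2}{1-\alpha^2},
\]
which equals $\alpha/(1+\alpha)$ when $\la x_i, x_j\ra = \alpha$ and $-\alpha/(1-\alpha)$ when $\la x_i, x_j\ra = -\alpha$. Since $x_i \mapsto y_i$ is injective, $Y := \{y_i\}$ consists of $N-1$ distinct points with $I(Y) \subset \{\alpha/(1+\alpha),\, -\alpha/(1-\alpha)\}$, as required.

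For (ii) $\Rightarrow$ (i) I would reverse this construction: embed $Y \subset S^{n-2}$ as the equator of $S^{n-1}$, pick a unit pole $e$ orthogonal to it, and set $x_i := \alpha e + \sqrt{1-\alpha^2}\,y_i$ together with $x_N := e$. One checks that each $x_i$ is a unit vector, that $\la x_i, e\ra = \alpha$, and that $\la x_i, x_j\ra = \alpha^2 + (1-\alpha^2)\la y_i, y_j\ra$ takes the value $\alpha$ (resp.\ $-\alpha$) exactly when $\la y_i, y_j\ra = \alpha/(1+\alpha)$ (resp.\ $-\alpha/(1-\alpha)$). Distinctness of the $y_i$ and $\alpha<1$ guarantee the $x_i$ are distinct and different from $e$, so $|X| = N$ and $I(X) \subset \{\pm\alpha\}$.

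Finally, the ``moreover'' clause for $1/2 < \alpha < 1$ is the one qualitative observation in the argument: in that range $\alpha/(1-\alpha) > 1$, so the value $-\alpha/(1-\alpha) < -1$ cannot occur as the inner product of two unit vectors. Hence in the normalized configuration produced for (i) $\Rightarrow$ (ii) no pair $x_i, x_j$ can satisfy $\la x_i, x_j\ra = -\alpha$, forcing $I(Y) \subset \{\alpha/(1+\alpha)\}$. I do not expect a genuine obstacle here; the main thing to get right is the bookkeeping of the antipode normalization, so that distinctness of the points is preserved, together with the algebraic simplification of the renormalized inner products, both of which are routine.
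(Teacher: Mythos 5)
Your proof is correct, but there is nothing in the paper to compare it against: the paper states this lemma with the attribution ``Musin'' and cites \cite{Musin} without giving any proof, so the result is used as a black box. Your argument is a complete, self-contained reconstruction of the standard projection/lifting device behind such statements (the same mechanism as in Larman--Rogers--Seidel and Musin): all the details check out. In the direction (i) $\Rightarrow$ (ii), the antipodal normalization is legitimate because sign flips preserve $\{\pm\alpha\}$ and $\alpha<1$ excludes antipodal pairs, so distinctness survives; the computation $\langle y_i,y_j\rangle = (\langle x_i,x_j\rangle - \alpha^2)/(1-\alpha^2)$ gives exactly $\alpha/(1+\alpha)$ and $-\alpha/(1-\alpha)$ in the two cases. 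The reverse direction via $x_i = \alpha e + \sqrt{1-\alpha^2}\,y_i$ with the extra point $x_N = e$ inverts this, and distinctness again follows from $\alpha<1$. Your observation for the ``moreover'' clause is also the right one: when $1/2<\alpha<1$ the value $-\alpha/(1-\alpha)$ is strictly less than $-1$, hence cannot be an inner product of unit vectors, which forces $I(Y)\subset\{\alpha/(1+\alpha)\}$ automatically. In short, your proposal supplies a proof the paper omits, and it is the expected one.
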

By Lemma \ref{mu}, we have the following corollary:
\begin{cor}\label{445}
For $t=4$ and $n=4,5$, there does not exist a tight harmonic index 4-design.
\end{cor}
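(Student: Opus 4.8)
The plan is to reduce, via Musin's Lemma \ref{mu}, the existence of a tight harmonic index $4$-design to the existence of a configuration of unit vectors with a common positive inner product in one lower dimension, and then to rule the latter out by a linear-independence (rank) argument.

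First I would record the relevant parameters. By Proposition \ref{4tight} we have $b_{4,4}=5$ and $b_{5,4}=7$, both integers, and for $n=4,5$ any tight harmonic index $4$-design $X\subset S^{n-1}$ must satisfy $|X|=b_{n,4}$ together with $I(X)\subset\{\pm\alpha_n\}$, where $\alpha_n=\sqrt{3/(n+4)}$. Thus $\alpha_4=\sqrt{3/8}$ and $\alpha_5=1/\sqrt{3}$. The decisive observation is that in both cases $1/2<\alpha_n<1$. I would therefore invoke the ``moreover'' clause of Lemma \ref{mu}: since $1/2<\alpha_n<1$, the existence of such an $X$ is equivalent to the existence of $Y\subset S^{n-2}$ with $|Y|=b_{n,4}-1$ and $I(Y)\subset\{\beta_n\}$, where $\beta_n:=\frac{\alpha_n}{1+\alpha_n}\in(0,1)$ is a single positive value. (The point behind this clause is that $\alpha_n>1/2$ forces $\frac{-\alpha_n}{1-\alpha_n}<-1$, a value no inner product of unit vectors can attain, so only $\beta_n$ survives.)

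Finally I would extract a contradiction from a dimension count. Because every pairwise inner product of $Y$ equals the fixed $\beta_n\in(0,1)$, the Gram matrix of $Y$ is $(1-\beta_n)\,\mathrm{Id}+\beta_n\,\mathrm{J}$, whose eigenvalues $1-\beta_n$ and $1+(|Y|-1)\beta_n$ are both strictly positive; hence this matrix is positive definite and the vectors of $Y$ are linearly independent in $\R^{n-1}$. This forces $|Y|\le n-1$. But $|Y|=b_{n,4}-1$ yields $|Y|=4>3=n-1$ when $n=4$ and $|Y|=6>4=n-1$ when $n=5$, a contradiction. Consequently no $X$ with $|X|=b_{n,4}$ and $I(X)\subset\{\pm\alpha_n\}$ exists, so by the equivalence in Proposition \ref{4tight} no tight harmonic index $4$-design exists for $n=4,5$.

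The only genuinely delicate point — the ``main obstacle'' — is justifying the passage to a \emph{single} inner-product value, i.e. verifying $\alpha_n>1/2$ so that the strengthened form of Lemma \ref{mu} applies; once that is secured, the positive-definiteness of the Gram matrix makes the dimension obstruction immediate and the rest is routine arithmetic.
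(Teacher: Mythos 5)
Your proposal is correct and follows essentially the same route as the paper: both reduce, via the ``moreover'' clause of Lemma \ref{mu} (applicable because $\alpha_n=\sqrt{3/(n+4)}>1/2$ for $n=4,5$), to a set $Y\subset S^{n-2}$ of size $b_{n,4}-1$ whose pairwise inner products all equal the single positive value $\alpha_n/(1+\alpha_n)$, and then derive a contradiction. The only difference is the endgame: the paper (for $n=4$, declaring $n=5$ ``similar'') cites the fact that four points on $S^2$ with one common inner product must form a regular simplex with inner product $-1/3$, contradicting positivity, whereas your Gram-matrix argument --- $(1-\beta_n)\,\mathrm{Id}+\beta_n\,\mathrm{J}$ is positive definite, so $Y$ is linearly independent and $|Y|\le n-1$ --- is self-contained and treats both cases uniformly; this is a cosmetic improvement rather than a different method, since the paper's simplex fact rests on the same rank consideration.
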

\begin{proof}
We show the case when $t=4$ and $n=4$. The case when $n=5$ is shown similarly. Suppose that $X$ is s a tight $4$-design  on $S^3$. Then $X$ satisfies $|X|=5$ and $I(X)\subset \left\{\pm \sqrt{3/8}\right\}$. By Lemma \ref{mu}, in this case, there exists $Y \subset S^2$ such that $|Y|=4$ and
\[I(Y) \subset \left\{\frac{\sqrt{\frac{3}{8}}}{1+\sqrt{\frac{3}{8}}}\right\}=\left\{\frac{2\sqrt{6}-3}{5}\right\}.\]
But $Y$ is a regular simplex on $S^2$ with $I(Y)=\{-1/3\}$. This is a contradiction.
\end{proof}
In order to show the non-existence of tight harmonic index 4-designs on $S^{n-1}$ for some more $n$, we use the method of Einhorn-Schoenberg \cite{ES}. The finite subset $X$ on Euclidean space is called a 2-\textit{distance set} if the number of distances appearing on $X$ is exactly 2. Let $X=\{x_1,\ldots,x_m\}$ be a 2-distance set with distances $\{1,(<)b\}$. Construct a graph $G=(X,E)$ for $X$ as follows. The vertex set is $X$, and the edge is joined when and only when the distance is $b$. Let $B$ be the adjacency matrix of $G$ indexed by $X=\{x_1,\ldots,x_m\}$. Put $C:=(b^2-1)B+J-I$. Let $L$ be a $(m-1)\times (m-1)$ matrix with the $(i-1,j-1)$-entry $L_{i-1,j-1}:=C_{1i}+C_{1j}-C_{ij}$ where $C_{ij}$ denotes the $(i,j)$-entry of $C$ and $i,j$ move from 2 to $m$. Then if $X$ can be isometrically embedded in $\R^n$, then the rank of $L$ must be at most $n$.

Let $X$ be a tight harmonic index 4-design on $S^6$. Then by Proposition \ref{4tight}, $\dis |X|=12, I(X)\subset \{\pm \sqrt{3/11}\}$. In particular, there must be a 2-distance set of 9 points with $\dis b^2=\frac{7+\sqrt{33}}{4}$ on $\R^7$. By using Magma and the database of graphs with 9 points (including inconnected ones), we checked by the above method whether there exists a graph of 9 points which can be isometrically embedded in $\R^7$ as a 2-distance set with $\dis b^2=\frac{7+\sqrt{33}}{4}$, and from this calculation, we found 60 possibilities of graphs among them. Next considering all extensions of the 60 possibilities to graphs of 10 points, we checked by the above method whether there exists an extended graph which can be isometrically embedded in $\R^7$ with $\dis b^2=\frac{7+\sqrt{33}}{4}$ among them, and we found that such an extended graph of 10 points does not exist. This means that there does not exist a 2-distance set of 10 points in $\R^7$ with $\dis b^2=\frac{7+\sqrt{33}}{4}
 $ and in particular, there does not exist a tight harmonic index 4-design on $S^6$.

Similarly let $X$ be a tight harmonic index 4-design on $S^7$, then $|X|=15, I(X) \subset \{\pm 1/2\}$. Particularly, there is a 2-distance set of 10 points with $b^2=3$ in $\R^8$. Running through all graphs of 10 points, we investigated whether there is a graph whose $L$ is of rank at most 8, and we found that there is not such a graph. This means that there is not a 2-distance set of 10 points with $b^2=3$ in $\R^8$. Therefore we see that there is not a tight harmonic index 4-design on $S^7$.

Next we consider the case when $n=10$. In this case, let $X$ be a tight harmonic index 4-design in $\R^{10}$, then $|X|=22$, $I(X)=\left\{\pm \sqrt{\frac{3}{14}}\right\}$. By Lemma \ref{mu}, there is some $X' \subset S^8$ with $|X'|=21$, $\dis I(X')=\left\{\alpha=\frac{\sqrt{3}}{\sqrt{14}+\sqrt{3}},\beta=\frac{-\sqrt{3}}{\sqrt{14}-\sqrt{3}}\right\}$. Fix a point of $X'$ and set $X_\alpha$ and $X_\beta$ to be subsets of $X'$ with inner products $\alpha$ and $\beta$ from the point, respectively. Each of them lies on the circle with the same latitude. By Pigeonhole principle, $|X_\alpha|\ge 10$ or $|X_\beta|\ge 10$. In particular, considering like as $X_\alpha, X_\beta \subset S^7$ and adjusting the norm, we can think $X_\alpha$ and $X_\beta$ 2-distance sets with inner products $\dis \left\{\frac{\alpha}{1+\alpha}, \frac{\beta-\alpha^2}{1-\alpha^2}\right\}$, $\dis \left\{\frac{\beta}{1+\beta},\frac{\alpha-\beta^2}{1-\beta^2} \right\}$ on $S^7$. Similar to the above, by Einhorn-S
 choenberg's method, we checked  for all graphs of 10 points whether there is a 2-distance set with the above inner product in $\R^8$, and we found that there is not such a graph. Therefore, finally, we see that there is not a tight harmonic index 4-design in $\R^{10}$.

Finally we use the following theorem to investigate the existence of tight harmonic index 4-designs.
\begin{theo}[Larman-Rogers-Seidel\cite{LRS}]\label{larman}
Let $X\subset \R^n$ be a 2-distance set with distances $\alpha, (<) \beta$. If $|X|>2n+3$, then there is a natural number $k$ at least 2 such that $\alpha^2/\beta^2=(k-1)/k$.
\end{theo}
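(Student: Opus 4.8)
The plan is to translate the purely geometric ratio condition into an integrality statement about the eigenvalues of the adjacency matrix of the \emph{two-distance graph} of $X$, and then to exploit that this matrix has integer entries. Write $X=\{x_1,\dots,x_m\}$ with $m=|X|$, put $a=\alpha^2$, $b=\beta^2$, and let $A$ be the adjacency matrix of the graph on $X$ in which $x_i$ and $x_j$ are joined exactly when $\|x_i-x_j\|=\beta$. With $J=\al\al^{\top}$ the all-ones matrix, the squared-distance matrix $D=(\|x_i-x_j\|^2)_{i,j}$ satisfies
\[
D=(b-a)A+a(J-I).
\]
The quantity forced to be an integer will be $\theta:=a/(b-a)$, and the conclusion is equivalent to $\theta\in\Z_{\ge 1}$: indeed $\alpha^2/\beta^2=\theta/(\theta+1)$, so $\theta=k-1$ gives $\alpha^2/\beta^2=(k-1)/k$ with $k\ge 2$.

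First I would bound the rank of $D$. After translating $X$ so that its centroid is the origin, the Gram matrix $G=(\langle x_i,x_j\rangle)_{i,j}$ has rank at most $n$ and satisfies $G\al=0$. Since $D=\rho\al^{\top}+\al\rho^{\top}-2G$ with $\rho=(\|x_i\|^2)_i$, conjugating by the projection $P=I-\tfrac1m J$ onto $\al^{\perp}$ kills the two rank-one terms and gives $PDP=-2G$, of rank at most $n$. Substituting $D=(b-a)A+a(J-I)$ and using $PJP=0$ shows that on the $(m-1)$-dimensional space $\al^{\perp}$ the operator $(b-a)\,PAP-aI$ has rank at most $n$, so its kernel $W$ has dimension at least $m-n-1$. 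For $v\in W$ one checks $Dv\in\mathrm{span}(\al)$, whence $Av-\theta v\in\mathrm{span}(\al)$, with $Av=\theta v$ precisely when $\langle\rho,v\rangle=0$. Discarding the at most one-dimensional locus $\{v\in W:\langle\rho,v\rangle\ne0\}$ leaves a subspace of dimension at least $m-n-2$ on which $Av=\theta v$. Thus $\theta$ is an eigenvalue of the \emph{integer} matrix $A$ of multiplicity at least $m-n-2$.

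The last step is the integrality argument. Because $A$ has integer entries, its characteristic polynomial is monic over $\Z$, so every eigenvalue is an algebraic integer and the roots of each irreducible rational factor occur with equal multiplicity. If $\theta$ were irrational its minimal polynomial would have degree $d\ge2$, and each of its $d$ Galois conjugates would be an eigenvalue of $A$ of the same multiplicity $\ge m-n-2$; counting eigenvalues with multiplicity gives $m\ge d(m-n-2)\ge 2(m-n-2)$, i.e.\ $m\le 2n+4$. Hence once $|X|$ is large, $\theta$ is a rational algebraic integer, so $\theta\in\Z$; since $\theta=a/(b-a)>0$ it is a positive integer, and $k=\theta+1\ge2$ yields $\alpha^2/\beta^2=(k-1)/k$.

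The main obstacle is sharpening this to the exact threshold $|X|>2n+3$: the crude conjugate count above only rules out irrationality for $m>2n+4$, so recovering the extra unit requires a careful accounting of the two distinguished directions $\al$ and $\rho$ rather than the Galois-conjugacy step. In particular, when $X$ lies on a sphere one has $\rho\in\mathrm{span}(\al)$, the one-dimensional loss disappears, the multiplicity improves to $m-n-1$, and the bound tightens accordingly; the general (non-spherical) case is exactly where the delicate bookkeeping of $\rho$ relative to $W$ is needed, and this is where I expect the real difficulty to lie.
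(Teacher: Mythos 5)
The paper itself does not prove this statement; it is imported verbatim from Larman--Rogers--Seidel \cite{LRS}, so your proposal has to stand on its own, and it does not quite reach the stated threshold. Your rank computation is correct: centering at the centroid, projecting by $P=I-\tfrac1m J$, and cutting down by the hyperplane orthogonal to $\rho$ does show that $\theta=\alpha^2/(\beta^2-\alpha^2)$ is an eigenvalue of the integer matrix $A$ with multiplicity at least $m-n-2$ (this is the same bound \cite{LRS} get more directly from the fact that the functions $\|x-x_i\|^2-\alpha^2$ lie in the $(n+2)$-dimensional span of $1,x_1,\dots,x_n,\|x\|^2$, so that the matrix $(\beta^2-\alpha^2)A-\alpha^2 I$ has rank at most $n+2$). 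The genuine gap is the boundary case: your conjugacy count $m\ge d(m-n-2)$ with $d\ge 2$ only produces a contradiction when $m>2n+4$, while the theorem asserts the conclusion for all $m\ge 2n+4$. For $m=2n+4$ exactly, irrationality of $\theta$ is consistent with everything you proved, and you explicitly leave this case open; as written, your argument establishes the theorem only under the strictly stronger hypothesis $|X|>2n+4$.

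Your diagnosis of where the difficulty lies is also misdirected: raising the multiplicity to $m-n-1$ by ``bookkeeping of $\rho$'' is available only when $X$ is spherical, which the theorem does not assume, so no refinement of the rank argument alone can close the case $m=2n+4$. What closes it is a trace argument. If $m=2n+4$ and $\theta$ is irrational, your own count forces the minimal polynomial of $\theta$ to be quadratic with roots $\theta,\theta'$ of multiplicity exactly $m/2$ each; these exhaust the spectrum of $A$, so $\mathrm{tr}(A)=0$ gives $\theta'=-\theta$. Now repeat your whole argument with the two distances interchanged: the $\alpha$-graph has adjacency matrix $J-I-A$, and the same rank bound makes $-\beta^2/(\beta^2-\alpha^2)=-(1+\theta)$ an eigenvalue of it with multiplicity at least $m-n-2=m/2$; its conjugate $-(1+\theta')$ has the same multiplicity, these two exhaust the spectrum, and the vanishing trace of $J-I-A$ forces $\theta+\theta'=-2$, contradicting $\theta+\theta'=0$. (Equivalently: spectrum $\{\pm\theta\}$ gives $A^2=\theta^2 I$, so the $\beta$-graph is $\theta^2$-regular with no two distinct vertices sharing a neighbour, hence a disjoint union of isolated edges and vertices, forcing $\theta^2\in\{0,1\}$ and $\theta$ rational.) With either patch your approach, which is essentially the original argument of \cite{LRS} in Gram-matrix clothing, does prove the theorem at the stated threshold $|X|>2n+3$.
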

From this theorem, we have the following:
\begin{theo}
Let $X$ be a tight harmonic index 4-design on $S^{n-1}$. Then $n=2$ or there is an odd $p\ge 5$ such that $n=3p^2-4$.
\end{theo}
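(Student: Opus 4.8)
The plan is to combine the numerical constraints that tightness forces (via Proposition \ref{4tight} and Corollary \ref{2dis}) with the Larman--Rogers--Seidel theorem, and then to eliminate a single borderline dimension by a sharper bound on equiangular lines.

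First I would record what a tight design must look like. By Proposition \ref{4tight}, a tight harmonic index $4$-design $X\subset S^{n-1}$ can exist only when $3\nmid n$, in which case $|X|=b_{n,4}=\frac{(n+1)(n+2)}{6}$ and, by Corollary \ref{2dis}, $I(X)=\{\pm\alpha\}$ with $\alpha=\sqrt{3/(n+4)}$. The small dimensions are already disposed of: $n\in\{3,6,9\}$ are multiples of $3$, so $b_{n,4}\notin\Z$; $n=4,5$ are excluded by Corollary \ref{445}; and $n=7,8,10$ are excluded by the Einhorn--Schoenberg computations carried out above. Hence, apart from the exceptional value $n=2$ (which is allowed and requires nothing further), I may assume $n\ge 11$.

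For $n\ge 11$ I would treat $X$ as a two-distance set in $\R^n$. Since $\frac{(n+1)(n+2)}{6}>n+1$, the set $X$ cannot be equidistant, so both inner products $+\alpha$ and $-\alpha$ actually occur and the two squared distances $2(1-\alpha)$ and $2(1+\alpha)$ are genuinely distinct. As $|X|>2n+3$ for every $n\ge 11$, Theorem \ref{larman} applies and yields an integer $k\ge 2$ with
\[
\frac{2(1-\alpha)}{2(1+\alpha)}=\frac{1-\alpha}{1+\alpha}=\frac{k-1}{k},
\]
which solves to $\alpha=\frac{1}{2k-1}$. Setting $p=2k-1$, this is an odd integer with $p\ge 3$, and $\alpha^2=1/p^2=3/(n+4)$ forces exactly $n=3p^2-4$.

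It remains to rule out $p=3$, and this is the main obstacle: the Larman--Rogers--Seidel bound is saturated there (it corresponds to $k=2$) and so does not by itself forbid it, which is why a further input is needed. For $p=3$ one has $n=23$, $\alpha=1/3$, and $|X|=100$. As observed in the discussion of equiangular lines above, the lines spanned by the points of a tight design form at least $|X|/2+1=51$ equiangular lines in $\R^{23}$ with common angle $\arccos(1/3)$. However, the Lemmens--Seidel bound \cite{Lemmens} asserts that for $n\ge 15$ there are at most $2(n-1)$ equiangular lines in $\R^n$ at angle $\arccos(1/3)$; for $n=23$ this is $44<51$, a contradiction. Therefore $p\ge 5$, and altogether $n=2$ or $n=3p^2-4$ for some odd $p\ge 5$. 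The delicate parts of the argument are exactly this borderline exclusion and the reliance on the earlier, partly computer-assisted, treatment of $n\le 10$; everything else is a direct application of Theorem \ref{larman}.
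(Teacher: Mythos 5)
Your proof is correct and follows essentially the same route as the paper's: dispose of $n\le 10$ via Proposition \ref{4tight}, Corollary \ref{445} and the Einhorn--Schoenberg computations, apply Theorem \ref{larman} for $n\ge 11$ to obtain $\alpha=1/(2k-1)$ and hence $n=3p^2-4$ with $p=2k-1$ odd, and eliminate $p=3$ (i.e.\ $n=23$, $|X|=100$) by the Lemmens--Seidel bound of $44$ equiangular lines with inner product $1/3$ in $\R^{23}$. Your explicit observation that $X$ cannot be equidistant (since $(n+1)(n+2)/6>n+1$), so that both inner products genuinely occur and the two-distance hypothesis of Theorem \ref{larman} is met, is a small detail the paper leaves implicit.
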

\begin{proof}
Suppose $n\le 10$. By Proposition \ref{4tight}, when $b_{n,4}$ is an integer, $n=2,4,5,7,8,10$. But as we mentioned above, in these cases, there is not a tight harmonic index 4-design. Therefore we suppose $n\ge 11$, and then $b_{n,4}\ge 2n+4$. By Proposition \ref{4tight}, if $X$ is a tight harmonic index 4-design on $S^{n-1}$, then $X$ is a 2-distance set with $I(X)=\{\pm \sqrt{3/(n+4)}\}$. Therefore, by Theorem \ref{larman}, there is a natural number $k$ such that 
\[\frac{2-2\sqrt{\frac{3}{n+4}}}{2+2\sqrt{\frac{3}{n+4}}}=\frac{k-1}{k}.\]
Deforming this equation, we have
\[(2k-1)\sqrt{\frac{3}{n+4}}=1.\]
Hence if we put $p=2k-1$, then $n=3p^2-4$. 

When $p=3$, that is, $n=23$, a tight harmonic index 4-design $X$ in $\R^{23}$ satisfies $|X|=100$ and $I(X)=\left\{\pm \frac{1}{3}\right\}$. So we obtain equiangular lines of at least 51 lines in $\R^{23}$. But, since the maximum number of equiangular lines with inner product $1/3$ is 44 (\cite{Lemmens}), there is not a tight harmonic index 4-design in $\R^{23}$.
\end{proof}
\begin{remark} 
Wei-Hsuan Yu (University of Maryland) informed us
in a private communication (July, 2013) that there are no tight
harmonic 4-designs for $p=5, 7, 9$ in Theorem 5.
He showed that in these 3 cases the maximum sizes of
equiangular lines with inner product $1/p$
are respectively 416, 1506, 3952, by using the semi-definite
programming, which are strictly smaller than $(n+1)(n+2)/6=876, 3480, 9640$ (respectively), 
and this is a contradiction. Similar results are also expected for larger $p$, but
it seems difficult to deal with the semidefinite programming for
larger dimensions.
\end{remark}
\paragraph{Acknowledgement}
We are grateful to A. Munemasa for his help to the Groebner basis calculation in this paper. Also we would like to thank A.~Barg, O.~Musin and W-H.~Yu for the fruitful discussion. E.~Bannai is supported in part by NSFC grant No. 11271257. T.~Okuda is supported by Grant-in-Aid for JSPS Fellow No.25-6095. 
\section{Appendix I}
In the table below, $n$ denotes the dimension, $t$ the harmonic index, the inner value is the corresponding $b_{n,t}$.
\begin{center}
\begin{tabular}{c|c|c|c|c|c|c|c|c}

&$n=3$&$n=4$&$n=5$&$n=6$ &$n=7$&$n=8$&$n=9$& $n=10$\\ \hline
$t=4$&3.33..&5&7&9.33..& 12&15&18.33..& 22\\ \hline
$t=6$&3.41..&5.29..&7.69..&10.69..&14.33..&18.67..&23.76..& 29.68..\\ \hline
$t=8$&3.44..&5.41..&8.01..&11.35..&15.55..&20.72..&27.004..& 34.52..\\ \hline
$t=10$&3.45..& 5.47..&8.18..&11.73..&16.26..&21.97..&29.04..& 37.69..\\ \hline
$t=12$&3.46..&5.51..&8.28..&11.95..&16.71..&22.77..&30.39..& 39.84..\\ \hline
$t=14$&3.46..&5.53..&8.35..&12.10..&17.22..&23.32..&31.33..&41.37.. \\ \hline
$t=16$&3.47..&5.54..&8.39..&12.21..&17.37..&23.71..&32.01..& 42.48..\\ \hline
$t=18$&3.47..&5.56.. &8.42..&12.28..&17.37..&24.004..&32.51..& 43.32..\\ \hline
$t=20$&3.47..&5.56..&8.45..&12.34..&17.49..&24.22..&32.89..& 43.97..\\ 
\end{tabular}
\end{center}

\section{Appendix II}
\begin{eqnarray*}
P_1=&&s_1 - \frac{2065889}{90}s_2s_{10}s_{11}^3s_{12}^6 + 
        \frac{133819}{5}s_2s_{10}s_{11}^3s_{12}^4 -\frac{53179}{6}s_2s_{10}s_{11}^3s_{12}^2
        + \frac{8691}{10}s_2s_{10}s_{11}^3 \\
 &&- \frac{42787813648741}{1534950}s_2s_{10}s_{11}
        s_{12}^{14} + \frac{115559013011239}{1534950}s_2s_{10}s_{11}s_{12}^{12} - 
        \frac{7079980409537}{85275}s_2s_{10}s_{11}s_{12}^{10} +\\ 
   &&     \frac{12340902711704}{255825}s_2s_{10}s_{11}s_{12}^8 - 
        \frac{451353797911}{28425}s_2s_{10}s_{11}s_{12}^6 + 
        \frac{502755386489}{170550}s_2s_{10}s_{11}s_{12}^4\\  
     &&   -\frac{16159950307}{56850}s_2s_{10}s_{11}s_{12}^2 +
        \frac{620366243}{56850}s_2s_{10}s_{11}
\end{eqnarray*}
\begin{eqnarray*}
P_2=&&    s_2^2 + \frac{123226544609}{153495}s_{12}^{14} - \frac{12142761148}{5685}s_{12}^{12} + 
        \frac{13149854522}{5685}s_{12}^{10} - \frac{22430288734}{17055}s_{12}^8 + 
        \frac{7204390444}{17055}s_{12}^6\\
&& - \frac{434722732}{5685}s_{12}^4 + 
        \frac{13667541}{1895}s_{12}^2 - \frac{518644}{1895},
\end{eqnarray*}
\begin{eqnarray*}
P_3=  &&  s_4 - \frac{931}{90}s_6s_8s_9s_{10}s_{11}s_{12}^6 + 
        \frac{1169}{15}s_6s_8s_9s_{10}s_{11}s_{12}^4 - 
        \frac{723}{10}s_6s_8s_9s_{10}s_{11}s_{12}^2 +\frac{14}{5}s_6s_8s_9s_{10}s_{11}\\
      && -\frac{6570067}{108}s_6s_9^2s_{10}s_{11}^2
        s_{12}^7 + \frac{2032163}{30}s_6s_9^2s_{10}s_{11}^2s_{12}^5 - 
        \frac{393431}{18}s_6s_9^2s_{10}s_{11}^2s_{12}^3 +    \frac{43413}{20}s_6s_9^2s_{10}s_{11}^2s_{12}\\
&& - 
        \frac{5384771}{270}s_6s_9^2s_{10}s_{12}^7 + 
        \frac{3875621}{180}s_6s_9^2s_{10}s_{12}^5 - 
        \frac{1182097}{180}s_6s_9^2s_{10}s_{12}^3 + \frac{36059}{60}s_6s_9^2s_{10}s_{12}
       \\ && + \frac{356573}{90}s_6s_{10}s_{11}^2s_{12}^7 - 
        \frac{117439}{36}s_6s_{10}s_{11}^2s_{12}^5 + 
        \frac{27467}{60}s_6s_{10}s_{11}^2s_{12}^3 + \frac{1849}{60}s_6s_{10}s_{11}^2s_{12} \\ &&-
        \frac{669664592777551}{18419400}s_6s_{10}s_{12}^{15} + 
        \frac{33389002989181}{341100}s_6s_{10}s_{12}^{13} - 
        \frac{165129256285693}{1534950}s_6s_{10}s_{12}^{11} \\ &&+ 
        \frac{21245809013441}{341100}s_6s_{10}s_{12}^9 - 
        \frac{10457057636549}{511650}s_6s_{10}s_{12}^7 + 
        \frac{1291518147979}{341100}s_6s_{10}s_{12}^5 \\ &&- 
        \frac{248918696267}{682200}s_6s_{10}s_{12}^3 + 
        \frac{1591261873}{113700}s_6s_{10}s_{12},
\end{eqnarray*}
\begin{eqnarray*}
P_4= &&s_5 - \frac{21413}{90}s_6s_8s_9s_{12}^6 + \frac{12698}{45}s_6s_8s_9s_{12}^4 - 
        \frac{547}{6}s_6s_8s_9s_{12}^2 + \frac{157}{15}s_6s_8s_9 + 
        \frac{234370871}{2700}s_6s_9^2s_{11}^3s_{12}^7 \\ &&- 
        \frac{10332679}{108}s_6s_9^2s_{11}^3s_{12}^5 + 
        \frac{8818663}{300}s_6s_9^2s_{11}^3s_{12}^3 - 
        \frac{612916}{225}s_6s_9^2s_{11}^3s_{12} - 
        \frac{119641879}{2700}s_6s_9^2s_{11}s_{12}^7\\ && + 
        \frac{13122571}{270}s_6s_9^2s_{11}s_{12}^5 - 
        \frac{2260261}{150}s_6s_9^2s_{11}s_{12}^3 + 
        \frac{1280941}{900}s_6s_9^2s_{11}s_{12} - \frac{33137083}{900}s_6s_{11}^3s_{12}^7 \\ &&
        + \frac{3377731}{90}s_6s_{11}^3s_{12}^5 - \frac{1659631}{150}s_6s_{11}^3s_{12}^3 + 
        \frac{292957}{300}s_6s_{11}^3s_{12} + \frac{73245280332707}{92097000}s_6s_{11}s_{12}^{15}\\ &&
       - \frac{450846965863}{189500}s_6s_{11}s_{12}^{13} + \frac{10986176538638}{3837375}s_6s_{11}s_{12}^{11} - 
        \frac{2315729753414}{1279125}s_6s_{11}s_{12}^9\\ && + 
        \frac{1139869081667}{1705500}s_6s_{11}s_{12}^7  - 
        \frac{21377631109}{142125}s_6s_{11}s_{12}^5 + 
       \frac{66029795539}{3411000} s_6s_{11}s_{12}^3 - 
        \frac{605545711}{568500}s_6s_{11}s_{12},
\end{eqnarray*}
\begin{eqnarray*}
P_5 = && s_6^2 + s_9^2 + \frac{31477292476}{51165}s_{12}^{14} - \frac{28546636678}{17055}s_{12}^{12} + 
        \frac{31772393212}{17055}s_{12}^{10} - \frac{18667559008}{17055}s_{12}^8\\ && + 
        \frac{2075243891}{5685}s_{12}^6 
 - \frac{130374328}{1895}s_{12}^4 + \frac{12793807}{1895}s_{12}^2
        - \frac{502928}{1895},
\end{eqnarray*}
\begin{eqnarray*}
P_6 = && s_7 + \frac{80066}{225}s_8s_{10}s_{11}s_{12}^6 - 
        \frac{11473}{25}s_8s_{10}s_{11}s_{12}^4 + \frac{514}{3}s_8s_{10}s_{11}s_{12}^2 - 
        \frac{326}{25}s_8s_{10}s_{11}\\ &&
       + \frac{149163889}{1350}s_9^3s_{10}s_{11}^2s_{12}^7 - 
        \frac{166091023}{1350}s_9^3s_{10}s_{11}^2s_{12}^5 + 
        \frac{1981177}{50}s_9^3s_{10}s_{11}^2s_{12}^3 - 
        \frac{882044}{225}s_9^3s_{10}s_{11}^2s_{12}\\&& +
        \frac{31101917}{1350}s_9^3s_{10}s_{12}^7 -\frac{5639221}{225}s_9^3s_{10}s_{12}^5 + 
        \frac{1748518}{225}s_9^3s_{10}s_{12}^3 - \frac{110357}{150}s_9^3s_{10}s_{12} \\ && - 
        \frac{10474681}{270}s_9s_{10}s_{11}^2s_{12}^7+ 
        \frac{9832354}{225}s_9s_{10}s_{11}^2s_{12}^5 - 
        \frac{128839}{9}s_9s_{10}s_{11}^2s_{12}^3 + 
        \frac{215869}{150}s_9s_{10}s_{11}^2s_{12}\\&& + 
        \frac{350356120184819}{15349500}s_9s_{10}s_{12}^{15} - 
        \frac{17538932686289}{284250}s_9s_{10}s_{12}^{13} + 
        \frac{261228066120826}{3837375}s_9s_{10}s_{12}^{11}\\ && - 
        \frac{50587288180003}{1279125}s_9s_{10}s_{12}^9 + 
        \frac{33283484702111}{2558250}s_9s_{10}s_{12}^7 - 
        \frac{343022968073}{142125}s_9s_{10}s_{12}^5\\ && + 
        \frac{132249006263}{568500}s_9s_{10}s_{12}^3 - 
        \frac{845412947}{94750}s_9s_{10}s_{12},
\end{eqnarray*}@
\begin{eqnarray*}
P_7 = && s_8^2 - \frac{12103}{25}s_8s_9s_{11}s_{12}^7 + \frac{15771}{25}s_8s_9s_{11}s_{12}^5
        - \frac{3884}{15}s_8s_9s_{11}s_{12}^3 + \frac{782}{25}s_8s_9s_{11}s_{12} - 
        \frac{123823}{50}s_9^2s_{11}^2s_{12}^6 \\ && + \frac{111398}{45}s_9^2s_{11}^2s_{12}^4 - 
        \frac{47836}{75}s_9^2s_{11}^2s_{12}^2 + \frac{5999}{150}s_9^2s_{11}^2 - 
        \frac{4958}{75}s_9^2s_{12}^6 + \frac{610697}{3150}s_9^2s_{12}^4 - 
        \frac{19913}{150}s_9^2s_{12}^2 \\ &&
        + \frac{25127}{1050}s_9^2 + 
        \frac{222257}{225}s_{11}^2s_{12}^6 - \frac{154501}{150}s_{11}^2s_{12}^4 + 
        \frac{1771}{6}s_{11}^2s_{12}^2 - \frac{1147}{50}s_{11}^2 - 
        \frac{7439110878101}{15349500}s_{12}^{14}\\ && + \frac{9949282989527}{7674750}s_{12}^{12} - 
        \frac{1807447305433}{1279125}s_{12}^{10} + \frac{2072199005099}{2558250}s_{12}^8 - 
        \frac{784671190763}{2984625}s_{12}^6\\ && + \frac{287748973489}{5969250}s_{12}^4 - 
        \frac{6099957393}{1326500}s_{12}^2  +\frac{343461673}{1989750},
\end{eqnarray*}
\begin{eqnarray*}
P_8= && s_8s_9^2 - \frac{931}{30}s_8s_{12}^6 + \frac{581}{15}s_8s_{12}^4 - \frac{27}{2}s_8s_{12}^2 
        + \frac{4}{5}s_8 + \frac{20417761}{300}s_9^3s_{11}^3s_{12}^7 - 
        \frac{13248179}{180}s_9^3s_{11}^3s_{12}^5 \\&& + \frac{6730367}{300}s_9^3s_{11}^3s_{12}^3 
        - \frac{310181}{150}s_9^3s_{11}^3s_{12} - \frac{87766301}{2700}s_9^3s_{11}s_{12}^7 +
        \frac{529613}{15}s_9^3s_{11}s_{12}^5 - \frac{4892887}{450}s_9^3s_{11}s_{12}^3\\ && + 
        \frac{305753}{300}s_9^3s_{11}s_{12}  - \frac{10569643}{900}s_9s_{11}^3s_{12}^7+ 
        \frac{1102409}{90}s_9s_{11}^3s_{12}^5 - \frac{89701}{25}s_9s_{11}^3s_{12}^3 + 
        \frac{97157}{300}s_9s_{11}^3s_{12}\\ && + \frac{190976459657431}{30699000}s_9s_{11}s_{12}^{15}
         - \frac{42777492988657}{2558250}s_9s_{11}s_{12}^{13} + 
        \frac{15659663136481}{852750}s_9s_{11}s_{12}^{11} \\&&- \frac{27213265067107}{2558250}s_9s_{11}s_{12}^9 + 
        \frac{5977648644193}{1705500}s_9s_{11}s_{12}^7 - \frac{280742045533}{426375}s_9s_{11}s_{12}^5  \\&&+ 
        \frac{74541803387}{1137000}s_9s_{11}s_{12}^3 - \frac{381858611}{142125}s_9s_{11}s_{12},
\end{eqnarray*}
\begin{eqnarray*}
P_9= && s_8s_{11}^2 - \frac{266}{9}s_8s_{12}^6 + 39s_8s_{12}^4 - \frac{43}{3}s_8s_{12}^2 + 
        s_8 + \frac{16326947}{135}s_9^3s_{11}^3s_{12}^7 - 
        \frac{2353435}{18}s_9^3s_{11}^3s_{12}^5 \\&& + \frac{3603841}{90}s_9^3s_{11}^3s_{12}^3  - 
        \frac{111427}{30}s_9^3s_{11}^3s_{12} - \frac{4447387}{90}s_9^3s_{11}s_{12}^7 + 
        \frac{2887003}{54}s_9^3s_{11}s_{12}^5 \\ && - \frac{1473799}{90}s_9^3s_{11}s_{12}^3 + 
        \frac{68422}{45}s_9^3s_{11}s_{12}  - \frac{4821649}{270}s_9s_{11}^3s_{12}^7 + 
        \frac{337631}{18}s_9s_{11}^3s_{12}^5\\&& - \frac{497281}{90}s_9s_{11}^3s_{12}^3 + 
        \frac{2507}{5}s_9s_{11}^3s_{12} + \frac{10917806127109}{767475}s_9s_{11}s_{12}^{15} - 
        \frac{176502793283831}{4604850}s_9s_{11}s_{12}^{13} \\ &&+ 
        \frac{32394402183899}{767475}s_9s_{11}s_{12}^{11} - 
        \frac{37638196267127}{1534950}s_9s_{11}s_{12}^9 + 
        \frac{1380774469283}{170550}s_9s_{11}s_{12}^7 \\ &&- 
        \frac{777290373751}{511650}s_9s_{11}s_{12}^5 + 
        \frac{12800538154}{85275}s_9s_{11}s_{12}^3 - 
        \frac{1029239317}{170550}s_9s_{11}s_{12},
\end{eqnarray*}
\begin{eqnarray*}
P_{10}=  && s_8s_{12}^8 - \frac{204}{133}s_8s_{12}^6 +\frac{732}{931}s_8s_{12}^4 - 
        \frac{144}{931}s_8s_{12}^2 + \frac{9}{931}s_8 - \frac{1961}{10}s_9^3s_{11}^3s_{12}^7 + 
        \frac{55473}{266}s_9^3s_{11}^3s_{12}^5 \\ && - \frac{591987}{9310}s_9^3s_{11}^3s_{12}^3 + 
        \frac{27351}{4655}s_9^3s_{11}^3s_{12} + \frac{2467}{30}s_9^3s_{11}s_{12}^7 - 
        \frac{11731}{133}s_9^3s_{11}s_{12}^5 + \frac{125099}{4655}s_9^3s_{11}s_{12}^3 \\ &&- 
        \frac{22983}{9310}s_9^3s_{11}s_{12} + \frac{451}{10}s_9s_{11}^3s_{12}^7 - 
        \frac{5968}{133}s_9s_{11}^3s_{12}^5 + \frac{60906}{4655}s_9s_{11}^3s_{12}^3 - 
        \frac{1551}{1330}s_9s_{11}^3s_{12} \\ &&- \frac{14205863}{900}s_9s_{11}s_{12}^{15} + 
        \frac{9613783}{225}s_9s_{11}s_{12}^{13} - \frac{67333066}{1425}s_9s_{11}s_{12}^{11} + 
        \frac{641577172}{23275}s_9s_{11}s_{12}^9 \\ && -\frac{60864437}{6650}s_9s_{11}s_{12}^7 + 
        \frac{40366983}{23275}s_9s_{11}s_{12}^5 - \frac{16224543}{93100}s_9s_{11}s_{12}^3 + 
        \frac{167133}{23275}s_9s_{11}s_{12},
\end{eqnarray*}
\begin{eqnarray*}
 P_{11}= &&  s_9^4 - \frac{931}{3}s_9^2s_{12}^6 + \frac{1036}{3}s_9^2s_{12}^4 - 111s_9^2s_{12}^2 +
        10s_9^2  - \frac{1922475898}{17055}s_{12}^{14} + \frac{31186458317}{102330}s_{12}^{12} \\ &&- 
        \frac{5734068788}{17055}s_{12}^{10} + \frac{3329333917}{17055}s_{12}^8 - 
        \frac{364230059}{5685}s_{12}^6 + \frac{67069961}{5685}s_{12}^4 - \frac{2116188}{1895}s_{12}^2 + 
        \frac{156859}{3790},
\end{eqnarray*}
\begin{eqnarray*}
P_{12}= && s_9^2s_{12}^8 - \frac{186}{133}s_9^2s_{12}^6 + \frac{33}{49}s_9^2s_{12}^4 - 
        \frac{18}{133}s_9^2s_{12}^2 + \frac{9}{931}s_9^2 - \frac{931}{30}s_{12}^{14} + 
        \frac{1232}{15}s_{12}^{12} - \frac{8414}{95}s_{12}^{10} \\&&+ \frac{33227}{665}s_{12}^8 - 
        \frac{73329}{4655}s_{12}^6 + \frac{12756}{4655}s_{12}^4 - \frac{117}{490}s_{12}^2 +\frac{36}{4655},
\end{eqnarray*}
\[P_{13}=   s_{10}^2 + s_{11}^2 + s_{12}^2 - 1,\]
\begin{eqnarray*}
P_{14}= &&  s_{11}^4 +\frac{266}{3} s_{11}^2s_{12}^6 - \frac{334}{3}s_{11}^2s_{12}^4 + 44s_{11}^2s_{12}^2
        - 6s_{11}^2 + \frac{5261982208}{30699}s_{12}^{14} - \frac{4743342590}{10233}s_{12}^{12} \\ &&+ 
        \frac{5243879872}{10233}s_{12}^{10} - \frac{1019857462}{3411}s_{12}^8 + 
        \frac{337961738}{3411}s_{12}^6 - \frac{21126893}{1137}s_{12}^4 + \frac{2064566}{1137}s_{12}^2 - 
        \frac{26734}{379},
\end{eqnarray*}
\begin{eqnarray*}
P_{15}= &&  s_{11}^2s_{12}^8 - \frac{186}{133}s_{11}^2s_{12}^6 + \frac{33}{49}s_{11}^2s_{12}^4 - 
        \frac{18}{133}s_{11}^2s_{12}^2 + \frac{9}{931}s_{11}^2 - \frac{266}{9}s_{12}^{14} + \frac{241}{3}s_{12}^{12}
        - \frac{35423}{399}s_{12}^{10} + \frac{47770}{931}s_{12}^8 \\ &&- \frac{15469}{931}s_{12}^6 + 
        \frac{2784}{931}s_{12}^4 - \frac{255}{931}s_{12}^2 + \frac{9}{931},
\end{eqnarray*} 
\begin{eqnarray*}
P_{16}= &&   s_{12}^{16} - \frac{390}{133}s_{12}^{14} + \frac{63765}{17689}s_{12}^{12} - \frac{299970}{123823}s_{12}^{10} 
        + \frac{843138}{866761}s_{12}^8 - \frac{207090}{866761}s_{12}^6 + \frac{30375}{866761}s_{12}^4 -
        \frac{2430}{866761}s_{12}^2 \\ &&+ \frac{81}{866761}
\end{eqnarray*}

\end{document}